\newtheorem{theorem}{Theorem}[section]
\theoremstyle{plain}
\newtheorem{lemma}{Lemma}[section]
\newtheorem{remark}{Remark}[section]
\numberwithin{equation}{section}
\begin{document}
\title[Global attractors for the plate equation]{Global attractors for the
plate equation with nonlocal nonlinearity in unbounded domains}
\author{{\small Zehra Arat }}
\address{{\small Department of Mathematics,} {\small Faculty of Science,
Hacettepe University, Beytepe 06800}, {\small Ankara, Turkey}}
\email{zarat@hacettepe.edu.tr}
\author{{\small Azer Khanmamedov}}
\email{azer@hacettepe.edu.tr}
\author{{\small Sema Simsek \ \ }}
\email{semasimsek@hacettepe.edu.tr}
\subjclass[2000]{35B41, 35G20, 37L30, 74K20}
\keywords{plate equation, global attractor}

\begin{abstract}
We consider the initial value problem for the semilinear plate equation with
nonlocal nonlinearity. We prove the existence of global attractor and then
establish the regularity and finite dimensionality of this attractor.
\end{abstract}

\maketitle

\section{Introduction}

\bigskip The main aim of this paper is to study the long time dynamics (in
terms of attractors) of the plate equation%
\begin{equation}
u_{tt}+\Delta ^{2}u+\alpha (x)u_{t}+\lambda u-f(\left\Vert \nabla u\left(
t\right) \right\Vert _{L^{2}\left(
\mathbb{R}
^{n}\right) })\Delta u=h\left( x\right) \text{, \ \ \ }(t,x)\in (0,\infty
)\times
\mathbb{R}
^{n}\text{,}  \tag{1.1}
\end{equation}%
with initial data%
\begin{equation}
u(0,x)=u_{0}(x)\text{, \ }u_{t}(0,x)=u_{1}(x)\text{, \ \ }x\in
\mathbb{R}
^{n}\text{,}  \tag{1.2}
\end{equation}%
where $\lambda >0,$ $h\in L^{2}\left(
\mathbb{R}
^{n}\right) $ and the functions $\alpha \left( \cdot \right) $, $f\left(
\cdot \right) $ satisfy the following conditions:%
\begin{equation}
\alpha \in L^{\infty }(%
\mathbb{R}
^{n})\text{, \ }\alpha (\cdot )\geq \alpha _{0}>0\text{ \ a.e. in }%
\mathbb{R}
^{n}\text{,}  \tag{1.3}
\end{equation}%
\begin{equation}
f\in C^{1}(%
\mathbb{R}
^{+})\text{, \ \ }f\left( z\right) \geq 0\text{, for all }z\in
\mathbb{R}
^{+}.  \tag{1.4}
\end{equation}%
\ \ \ By the semigroup theory, it is easy to show that under the conditions
(1.3) and (1.4), for every $\left( u_{0},u_{1}\right) \in H^{2}\left(
\mathbb{R}
^{n}\right) \times L^{2}\left(
\mathbb{R}
^{n}\right) $, the problem (1.1)-(1.2) has a unique weak solution $u\in
C\left( [0,\infty );H^{2}\left(
\mathbb{R}
^{n}\right) \right) \cap $ $C^{1}\left( [0,\infty );L^{2}\left(
\mathbb{R}
^{n}\right) \right) $, which depends continuously on the initial data and
satisfies the energy equality
\begin{equation*}
E\left( u\left( t\right) \right) +\frac{1}{2}F\left( \left\Vert \nabla
u\left( t\right) \right\Vert _{L^{2}\left(
\mathbb{R}
^{n}\right) }^{2}\right) -\int\limits_{%
\mathbb{R}
^{n}}h\left( x\right) u\left( t,x\right) dx+\int\limits_{s}^{t}\int\limits_{%
\mathbb{R}
^{n}}\alpha \left( x\right) \left\vert u_{t}\left( \tau ,x\right)
\right\vert ^{2}dxd\tau
\end{equation*}%
\begin{equation}
\text{ }=E\left( u\left( s\right) \right) +\frac{1}{2}F\left( \left\Vert
\nabla u\left( s\right) \right\Vert _{L^{2}\left(
\mathbb{R}
^{n}\right) }^{2}\right) -\int\limits_{%
\mathbb{R}
^{n}}h\left( x\right) u\left( s,x\right) dx\text{, \ }\forall t\geq s\geq 0%
\text{,}  \tag{1.5}
\end{equation}%
where $F\left( z\right) =\int\limits_{0}^{z}f\left( \sqrt{s}\right) ds$ for
all $z\in
\mathbb{R}
^{+}$ and $E\left( u\left( t\right) \right) =\frac{1}{2}\int\limits_{%
\mathbb{R}
^{n}}(\left\vert u_{t}\left( t,x\right) \right\vert ^{2}+\left\vert {\small %
\Delta u}\left( t,x\right) \right\vert ^{2}+\lambda \left\vert {\small u}%
\left( t,x\right) \right\vert ^{2})dx$. Moreover, if $\left(
u_{0},u_{1}\right) \in H^{4}\left(
\mathbb{R}
^{n}\right) \times H^{2}\left(
\mathbb{R}
^{n}\right) $, then $u$ is a strong solution from the class $C\left(
[0,\infty );H^{4}\left(
\mathbb{R}
^{n}\right) \right) $\newline
$\cap $ $C^{1}\left( [0,\infty );H^{2}\left(
\mathbb{R}
^{n}\right) \right) \cap $ $C^{2}\left( [0,\infty );L^{2}\left(
\mathbb{R}
^{n}\right) \right) $. Therefore, the problem (1.1)-(1.2) generates a
strongly continuous semigroup $\left\{ S\left( t\right) \right\} _{t\geq 0}$
in $H^{2}\left(
\mathbb{R}
^{n}\right) \times L^{2}\left(
\mathbb{R}
^{n}\right) $ by the formula $\left( u\left( t\right) ,u_{t}\left( t\right)
\right) =S\left( t\right) (u_{0},u_{1})$, where $u\left( t,x\right) $ is a
weak solution of (1.1)-(1.2) with the initial data $\left(
u_{0},u_{1}\right) $. By (1.4) and (1.5), we have the inequality%
\begin{equation}
E\left( u\left( t\right) \right) +\int\limits_{0}^{t}\int\limits_{%
\mathbb{R}
^{n}}\alpha \left( x\right) \left\vert u_{t}\left( \tau ,x\right)
\right\vert ^{2}dxd\tau \leq c\left( \left\Vert \left( u_{0},u_{1}\right)
\right\Vert _{H^{2}\left(
\mathbb{R}
^{n}\right) \times L^{2}\left(
\mathbb{R}
^{n}\right) }\right) \text{, \ \ }\forall t\geq 0\text{,}  \tag{1.6}
\end{equation}%
which implies the boundedness of $\left\{ S\left( t\right) \right\} _{t\geq
0}$ in $H^{2}\left(
\mathbb{R}
^{n}\right) \times L^{2}\left(
\mathbb{R}
^{n}\right) $, where $c:%
\mathbb{R}
^{+}\rightarrow
\mathbb{R}
^{+}$ is a nondecreasing function.

The problem of investigating the asymptotic behavior of evolution equations
modeling many physical phenomena has been attracting more attention over the
last few decades. It is well known that the asymptotic behavior of these
equations can be described by means of the attractors. The attractors for
plate equations has been one of the intensively studied topic in recent
years. We refer to [1-10] for attractors of plate equations with local and
nonlocal nonlinearities in bounded domains. In the case of unbounded
domains, there are obstacles in applying the methods given for bounded
domains due to the lack of Sobolev compact embedding theorems. So as to
handle these obstacles, the authors of [11-14] established the uniform tail
estimates for the plate equations with local nonlinearities.

The situation becomes more difficult when the domain is unbounded and the
equation includes nonlocal nonlinearity, for example $f(\left\Vert \nabla
u\left( t\right) \right\Vert _{L^{2}\left(
\mathbb{R}
^{n}\right) })\Delta u(t)$ as in the case of equation (1.1). When $%
f(s)=s^{2} $, this nonlocal term becomes famous Berger nonlinearity (see
[15]). In the unbounded domain case, the operator $\mathcal{F}(u):=$ $%
f(\left\Vert \nabla u\right\Vert _{L^{2}\left(
\mathbb{R}
^{n}\right) })\Delta u$ which is determined by the nonlocal term mentioned
above, besides being not compact, is not also weakly continuous from $%
H^{2}\left(
\mathbb{R}
^{n}\right) $ to $L^{2}\left(
\mathbb{R}
^{n}\right) $. So, in order to establish the asymptotic compactness which is
necessary for the existence of the global attractor, we are not able to
apply either the standard splitting method or the energy method devised in
[16]. To overcome these difficulties, we apply compensated compactness
method introduced in [17] and prove the asymptotic compactness (see Lemma
2.2) which, together with the presence of the strict Lyapunov function,
leads to the existence of a global attractor. Then, by using the invariance
of the global attractor and the structural property of the set of stationary
points, we establish the regularity (see\ Theorem 3.1) and consequently, the
finite dimensionality (see Theorem 4.1) of the global attractor.

Our main result is as follows:

\begin{theorem}
Under conditions (1.3) and (1.4) the semigroup $\left\{ S\left( t\right)
\right\} _{t\geq 0}$ generated by the problem (1.1)-(1.2) possesses a global
attractor $\mathcal{A}$ in $H^{2}\left(
\mathbb{R}
^{n}\right) \times L^{2}\left(
\mathbb{R}
^{n}\right) $ and $\mathcal{A=M}^{u}\left( \mathcal{N}\right) $. Here $%
\mathcal{M}^{u}\left( \mathcal{N}\right) $ is unstable manifold emanating
from the set of stationary points $\mathcal{N}$ (for definition, see [18,
p.359]). Moreover, the global attractor $\mathcal{A}$ is bounded in $%
H^{4}\left(
\mathbb{R}
^{n}\right) \times H^{2}\left(
\mathbb{R}
^{n}\right) $ and it has finite fractal dimension.

\begin{remark}
We note that by using the method of this paper, one can prove the existence,
regularity and finite dimensionality of the global attractor for the initial
boundary value problem%
\begin{equation}
\left\{
\begin{array}{c}
u_{tt}+\Delta ^{2}u+\alpha (x)u_{t}+\lambda u-f(\left\Vert \nabla u\left(
t\right) \right\Vert _{L^{2}\left( \Omega \right) })\Delta u=h\left(
x\right) \text{, \ \ }(t,x)\in (0,\infty )\times \Omega \text{,} \\
u(t,x)=\frac{\partial }{\partial \nu }u(t,x)=0\text{, \ \ \ \ \ \ \ \ \ \ \
\ \ \ \ \ \ \ \ \ \ \ \ \ \ \ \ \ \ \ \ \ \ \ \ \ \ \ \ \ \ \ }(t,x)\in
(0,\infty )\times \partial \Omega \text{,} \\
u(0,x)=u_{0}(x)\text{, \ \ \ \ }u_{t}(0,x)=u_{1}(x)\text{, \ \ \ \ \ \ \ \ \
\ \ \ \ \ \ \ \ \ \ \ \ \ \ \ \ \ \ \ \ \ \ \ \ \ \ \ \ \ \ \ \ \ \ }x\in
\Omega \text{,}%
\end{array}%
\right.   \tag{1.7}
\end{equation}%
where $\Omega \subset
\mathbb{R}
^{n}$ is an unbounded domain with smooth boundary, $\nu $ is outer unit
normal vector, $\lambda >0$, $h\in L^{2}\left( \Omega \right) $, the
function $f(\cdot )$ satisfies the condition (1.4) and the damping
coefficient $\alpha (\cdot )$ satisfies the following conditions%
\begin{equation*}
\alpha \in L^{\infty }(\Omega )\text{, \ \ }\alpha (\cdot )\geq \alpha
_{0}>0,\text{ a.e. in }\Omega \text{.}
\end{equation*}
\end{remark}
\end{theorem}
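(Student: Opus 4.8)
The plan is to establish the four constituent assertions of the theorem in order: (i) the semigroup $\left\{S(t)\right\}_{t\geq 0}$ has a bounded absorbing set in $H^{2}\times L^{2}$; (ii) it is asymptotically compact; (iii) it admits a strict Lyapunov function, so that the global attractor coincides with the unstable manifold $\mathcal{M}^{u}(\mathcal{N})$; and (iv) the attractor enjoys the stated regularity and finite fractal dimensionality. I would begin with the absorbing set: starting from the energy equality (1.5), I multiply the equation by $u$ (in the $L^{2}$ inner product), add a small multiple $\varepsilon$ of the resulting identity to (1.5) differentiated in $t$, and use (1.3)–(1.4) together with the Poincar\'e-type bound $\lambda\left\Vert u\right\Vert^{2}\leq$ controlled quantities to produce a differential inequality of the form $\frac{d}{dt}\Phi(u(t))+\delta\Phi(u(t))\leq C$ for a functional $\Phi$ equivalent to $E(u(t))+\frac{1}{2}F(\left\Vert\nabla u\right\Vert^{2})-\int h\,u$; Gronwall then yields the bounded absorbing set $\mathcal{B}$.

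Next, asymptotic compactness, which I expect to be the main obstacle — the nonlocal operator $\mathcal{F}(u)=f(\left\Vert\nabla u\right\Vert_{L^{2}})\Delta u$ is neither compact nor weakly continuous from $H^{2}$ to $L^{2}$, so neither the splitting method, nor the tail-estimate method, nor Ball's energy method applies directly. Following the strategy signalled in the introduction, I would invoke the compensated-compactness/energy argument of Lemma 2.2: take a bounded sequence of trajectories, pass to weak limits, and crucially exploit that the scalar quantity $\left\Vert\nabla u(t)\right\Vert_{L^{2}}$ converges (along a subsequence) so that along the limit the nonlocal coefficient $f(\left\Vert\nabla u\right\Vert_{L^{2}})$ becomes an honest continuous-in-$t$ function; then one writes the energy of the difference of two solutions, uses the energy equality to convert weak convergence plus convergence of energies into strong convergence, handling the damping term $\int_{0}^{t}\int\alpha|u_{t}|^{2}$ and the nonlocal term via the compensated-compactness trick. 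Once asymptotic compactness and dissipativity are in hand, the standard theory (e.g. [18]) gives the global attractor $\mathcal{A}$.

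For the structure $\mathcal{A}=\mathcal{M}^{u}(\mathcal{N})$, I would verify that $L(u_{0},u_{1}):=E(u)+\frac{1}{2}F(\left\Vert\nabla u\right\Vert^{2})-\int h\,u$ is a strict Lyapunov function: it is nonincreasing along orbits by (1.5) and (1.4), and it is constant on an orbit only if $u_{t}\equiv 0$ a.e. where $\alpha>0$, hence (by (1.3)) everywhere, forcing the orbit to be a stationary point. The existence of this strict Lyapunov function together with the existence of the attractor gives, by the general gradient-system theorem, $\mathcal{A}=\mathcal{M}^{u}(\mathcal{N})$, with $\mathcal{N}$ bounded in $H^{2}\times L^{2}$. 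Finally, for regularity (Theorem 3.1) I would use the invariance of $\mathcal{A}$: on $\mathcal{A}$ the coefficient $f(\left\Vert\nabla u(t)\right\Vert_{L^{2}})$ is a bounded measurable function of $t$, so the equation becomes an essentially linear plate equation with bounded forcing; bootstrapping via elliptic regularity for $\Delta^{2}$ on the stationary set $\mathcal{N}$ and a difference-quotient/interpolation argument in $t$ along complete bounded trajectories yields boundedness of $\mathcal{A}$ in $H^{4}\times H^{2}$. For finite fractal dimension (Theorem 4.1) I would linearize $S(t)$ along trajectories in $\mathcal{A}$ and establish the smoothing ("quasi-stability" / $\ell$-trajectory) inequality $\left\Vert S(t)z_{1}-S(t)z_{2}\right\Vert_{H^{2}\times L^{2}}\leq C e^{-\gamma t}\left\Vert z_{1}-z_{2}\right\Vert_{H^{2}\times L^{2}}+$ (lower-order compact seminorm of the difference), which, by Ladyzhenskaya's/Chueshov–Lasiecka's criterion, bounds $\dim_{f}\mathcal{A}$; the nonlocal term contributes only a rank-one perturbation (through the scalar $f(\left\Vert\nabla u\right\Vert_{L^{2}})$) plus a term controlled by the already-established $H^{4}$ regularity, so it does not obstruct the estimate.
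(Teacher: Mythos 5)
Your overall architecture (strict Lyapunov function, gradient--system theorem, asymptotic compactness, regularity via invariance, a quasi-stability estimate for the dimension) matches the paper's, but the step you yourself flag as the main obstacle --- asymptotic compactness --- is precisely where your plan breaks down. You propose to pass to weak limits and ``use the energy equality to convert weak convergence plus convergence of energies into strong convergence''; that is Ball's energy method, and the paper states explicitly (and correctly) that it is inapplicable here. The reason: in $\mathbb{R}^{n}$ the embedding $H^{2}\hookrightarrow H^{1}$ is not compact, so if $v_{m}\rightharpoonup v$ the scalars $\Vert \nabla v_{m}(t)\Vert _{L^{2}}$ converge (after extraction) to some $q(t)$ which may be strictly larger than $\Vert \nabla v(t)\Vert _{L^{2}}$; the weak limit then solves an equation with coefficient $f(q(t))$ rather than $f(\Vert \nabla v(t)\Vert _{L^{2}})$, so you cannot match its energy identity against the limit of the energies. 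The paper never passes to a limit equation: it proves the sequence $S(t_{k})\varphi _{k}$ is Cauchy directly, by writing the equation for $v_{m}-v_{l}$, multiplying by $t(v_{mt}-v_{lt})$ and $\varepsilon t(v_{m}-v_{l})$, and controlling the dangerous cross term involving $f(\Vert \nabla v_{m}\Vert )\Delta v_{m}-f(\Vert \nabla v_{l}\Vert )\Delta v_{l}$ by the key Lemma 2.1, whose proof splits the time axis into $\{\tau :\Vert \nabla v_{m}(\tau )\Vert \leq \varepsilon \}$ and its complement (a countable union of open intervals), integrates by parts on each interval, and uses the sign condition $f\geq 0$ to discard the boundary terms. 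This device is the heart of the paper and has no counterpart in your sketch.

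A second, smaller gap: your opening step, an absorbing set from $\frac{d}{dt}\Phi +\delta \Phi \leq C$, is not obtainable under (1.4) alone. The multiplier $u$ produces the good term $f(\Vert \nabla u\Vert )\Vert \nabla u\Vert ^{2}$, but $\delta \Phi $ contains $\tfrac{\delta }{2}F(\Vert \nabla u\Vert ^{2})$, and nothing in (1.4) forces $F(z)\leq C\,z\,f(\sqrt{z})+C$; for instance $f(s)=1+\sin (s^{2})$ gives $F(z)=z+1-\cos z$ unbounded while $z f(\sqrt{z})$ vanishes along $z=3\pi /2+2k\pi $. The paper sidesteps this entirely: it uses only the uniform trajectory bound (1.6), boundedness of $\mathcal{N}$, and the Lyapunov function, and then invokes [18, Corollary 7.5.7], which gives the attractor (equal to $\mathcal{M}^{u}(\mathcal{N})$) for asymptotically smooth gradient systems without a separately constructed absorbing set. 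Your regularity and finite-dimensionality paragraphs are broadly consistent with the paper's Sections 3--4, though note that in an unbounded domain the ``lower-order compact seminorm'' in the quasi-stability inequality is not automatic: the paper manufactures it from the $H^{4}\times H^{2}$ bound on $\mathcal{A}$ restricted to balls $B(0,r)$ together with the uniform tail smallness $\Pi _{r}\rightarrow 0$ obtained via the cut-off $\eta _{r}$.
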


\begin{remark}
We also note that we critically use the strict positivity of $\alpha (\cdot
) $ (see (1.3)) in the proof of asymptotic compactness of the semigroup $%
\left\{ S\left( t\right) \right\} _{t\geq 0}$ (see Lemma 2.2). In the case
when the function $\alpha (\cdot )$ is not strictly positive, for example if
$\alpha (\cdot )$ vanishes in a set of positive measure, our method is not
applicable. Another obstacle in this case is related to unique continuation
of solutions which is important for the construction of a strict Lyapunov
function. To the best of our knowledge the unique continuation of solutions
for the equations (1.1) and (1.7)$_{1}$-(1.7)$_{2}$ is also an open
question. Thus, in the case when $\alpha (\cdot )$ vanishes in a set of
positive measure, the questions about long time dynamics of (1.1)-(1.2) and
(1.7), in terms of attractors, are completely open.
\end{remark}

\section{Existence of the global attractor}

In this section, we will show the existence of the global attractor. To this
end, we first prove the following lemma.

\begin{lemma}
Let the conditions (1.3) and (1.4) hold. Also, assume that\ the sequence $%
\left\{ v_{m}\right\} _{m=1}^{\infty }$ is bounded in $L^{\infty }\left(
0,T;H^{2}\left(
\mathbb{R}
^{n}\right) \right) \cap W^{1,\infty }\left( 0,T;L^{2}\left(
\mathbb{R}
^{n}\right) \right) $ and the sequence $\left\{ \left\Vert \nabla
v_{m}\left( t\right) \right\Vert _{L^{2}\left(
\mathbb{R}
^{n}\right) }\right\} _{m=1}^{\infty }$ is convergent, for all $t\in \lbrack
0,T]$. Then, for all $\gamma >0$, there exists some $c_{\gamma }>0$ such that%
\begin{equation*}
\int\limits_{0}^{t}\int\limits_{%
\mathbb{R}
^{n}}\tau \left( f\left( \left\Vert \nabla v_{m}\left( \tau \right)
\right\Vert _{L^{2}\left(
\mathbb{R}
^{n}\right) }\right) \Delta v_{m}(\tau ,x)-f\left( \left\Vert \nabla
v_{l}\left( \tau \right) \right\Vert _{L^{2}\left(
\mathbb{R}
^{n}\right) }\right) \Delta v_{l}(\tau ,x)\right) \left( v_{mt}\left( \tau
,x\right) -v_{lt}\left( \tau ,x\right) \right) dxd\tau
\end{equation*}%
\begin{equation*}
\leq \gamma \int\limits_{0}^{t}\tau E\left( v_{m}\left( \tau \right)
-v_{l}\left( \tau \right) \right) d\tau +c_{\gamma
}\int\limits_{0}^{t}E\left( v_{m}\left( \tau \right) -v_{l}\left( \tau
\right) \right) d\tau
\end{equation*}%
\begin{equation*}
+c_{\gamma }\int\limits_{0}^{t}\tau E\left( v_{m}\left( \tau \right)
-v_{l}\left( \tau \right) \right) \left\Vert v_{mt}\left( \tau \right)
\right\Vert _{L^{2}\left(
\mathbb{R}
^{n}\right) }^{2}d\tau +K^{m,l}(t)\text{,}
\end{equation*}%
for all $t\in \lbrack 0,T]$, where $K^{m,l}\in C[0,T]$ \ and \ $\underset{%
m\rightarrow \infty }{\lim \sup }$ $\underset{l\rightarrow \infty }{\lim
\sup }\left\Vert K^{m,l}\right\Vert _{C[0,T]}=0$.
\end{lemma}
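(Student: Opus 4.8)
The plan is to split off the nonlocal factor $f(\|\nabla v_m(\tau)\|)$ as a coefficient of $\Delta(v_m-v_l)$, recognize the resulting $L^2$-pairing as a time derivative of $\|\nabla(v_m-v_l)\|^2$, integrate by parts in $\tau$, and distribute the pieces among the four terms on the right, the remaining errors going into $K^{m,l}$. Throughout let $(\cdot,\cdot)$ and $\|\cdot\|$ denote the $L^2(\mathbb{R}^n)$ inner product and norm, $w:=v_m-v_l$, $b_m(\tau):=f(\|\nabla v_m(\tau)\|)$, $b_l(\tau):=f(\|\nabla v_l(\tau)\|)$; constants $c,C$ depend only on the $L^\infty(0,T;H^2(\mathbb{R}^n))\cap W^{1,\infty}(0,T;L^2(\mathbb{R}^n))$-bound of $\{v_m\}$, while $c_\gamma$ may also depend on $\gamma$. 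The one genuine obstacle: although $\tau\mapsto\|\nabla v_m(\tau)\|^2$ is Lipschitz, with a.e.\ derivative $-2(\Delta v_m(\tau),v_{mt}(\tau))$ (a standard consequence of the stated regularity), its square root — hence $b_m(\cdot)$ — need not be, so $b_m$ cannot be differentiated in $\tau$ directly. This is circumvented by a $C^1$-approximation of $f$ chosen so that the approximation error produces the term $\gamma\int_0^t\tau E(w)\,d\tau$, while the time derivative of the approximant carries a factor $(\Delta v_m,v_{mt})$ and, after Young's inequality, produces $c_\gamma\int_0^t\tau E(w)\|v_{mt}\|^2\,d\tau$.

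I would first write $f(\|\nabla v_m\|)\Delta v_m-f(\|\nabla v_l\|)\Delta v_l=b_m\,\Delta w+(b_m-b_l)\,\Delta v_l$, so the left-hand side becomes $I_1+I_2$ with
\[
I_1=\int_0^t\int_{\mathbb{R}^n}\tau\,b_m\,\Delta w\,(v_{mt}-v_{lt})\,dx\,d\tau,\qquad I_2=\int_0^t\int_{\mathbb{R}^n}\tau\,(b_m-b_l)\,\Delta v_l\,(v_{mt}-v_{lt})\,dx\,d\tau .
\]
The term $I_2$ goes entirely into $K^{m,l}$: the factors $\|\Delta v_l(\tau)\|$ and $\|v_{mt}(\tau)-v_{lt}(\tau)\|$ are bounded uniformly in $m,l,\tau$, while the assumed convergence of $\{\|\nabla v_m(\tau)\|\}$ for each $\tau$ and the continuity of $f$ give $b_m(\tau)-b_l(\tau)\to0$ pointwise with $|b_m-b_l|$ uniformly bounded, so $\int_0^T|b_m(\tau)-b_l(\tau)|\,d\tau\to0$ by dominated convergence. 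Hence $I_2\le|I_2|\le K^{m,l}(t):=C\int_0^t\tau\,|b_m(\tau)-b_l(\tau)|\,d\tau\in C[0,T]$ with $\limsup_m\limsup_l\|K^{m,l}\|_{C[0,T]}=0$.

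For $I_1$, using $(\Delta w(\tau),v_{mt}(\tau)-v_{lt}(\tau))=-\frac12\frac{d}{d\tau}\|\nabla w(\tau)\|^2$ one gets $I_1=-\frac12\int_0^t\tau\,b_m(\tau)\frac{d}{d\tau}\|\nabla w(\tau)\|^2\,d\tau$. Put $R:=\sup_{m,\tau}\|\nabla v_m(\tau)\|$ and write $b_m(\tau)=G(\|\nabla v_m(\tau)\|^2)$ with $G(z):=f(\sqrt z)$, which is continuous and nonnegative on $[0,R^2]$. For $\gamma>0$ I would choose (Weierstrass approximation plus a small upward shift) $G_\gamma\in C^1([0,R^2])$ with $G_\gamma\ge0$ and $\|G-G_\gamma\|_{C[0,R^2]}\le\gamma$, and set $b_m^\gamma(\tau):=G_\gamma(\|\nabla v_m(\tau)\|^2)$; then $b_m^\gamma$ is Lipschitz in $\tau$ with $(b_m^\gamma)'(\tau)=G_\gamma'(\|\nabla v_m(\tau)\|^2)\,(-2(\Delta v_m(\tau),v_{mt}(\tau)))$ a.e., $0\le b_m^\gamma\le M+\gamma$ (where $M:=\max_{[0,R]}f$), and $|b_m-b_m^\gamma|\le\gamma$. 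Splitting $b_m=b_m^\gamma+(b_m-b_m^\gamma)$ in $I_1$, the $(b_m-b_m^\gamma)$-part is, in absolute value, at most $\frac\gamma2\int_0^t\tau\,|\frac{d}{d\tau}\|\nabla w\|^2|\,d\tau\le\gamma\int_0^t\tau E(w(\tau))\,d\tau$, since $|\frac{d}{d\tau}\|\nabla w\|^2|=2|(\Delta w,w_t)|\le\|\Delta w\|^2+\|w_t\|^2\le2E(w)$.

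Finally, for the $b_m^\gamma$-part, both $b_m^\gamma$ and $\|\nabla w(\cdot)\|^2$ being absolutely continuous, integration by parts in $\tau$ gives
\[
-\tfrac12\int_0^t\tau\,b_m^\gamma\,\tfrac{d}{d\tau}\|\nabla w\|^2\,d\tau=-\tfrac12\,t\,b_m^\gamma(t)\,\|\nabla w(t)\|^2+\tfrac12\int_0^t b_m^\gamma(\tau)\|\nabla w(\tau)\|^2\,d\tau+\tfrac12\int_0^t\tau\,(b_m^\gamma)'(\tau)\,\|\nabla w(\tau)\|^2\,d\tau .
\]
The boundary term is $\le0$ and is dropped. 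Using $\|\nabla w\|^2\le\|\Delta w\|\,\|w\|\le c\,E(w)$ and $b_m^\gamma\le M+\gamma$, the first integral is $\le c_\gamma\int_0^t E(w(\tau))\,d\tau$. In the second integral $|(b_m^\gamma)'(\tau)|\le 2\|G_\gamma'\|_{C[0,R^2]}\,\|\Delta v_m(\tau)\|\,\|v_{mt}(\tau)\|\le c_\gamma\|v_{mt}(\tau)\|$, so it is $\le c_\gamma\int_0^t\tau\,\|v_{mt}(\tau)\|\,E(w(\tau))\,d\tau$, and Young's inequality $c_\gamma\|v_{mt}\|E(w)\le\frac\gamma2E(w)+c_\gamma\|v_{mt}\|^2E(w)$ bounds it by $\frac\gamma2\int_0^t\tau E(w(\tau))\,d\tau+c_\gamma\int_0^t\tau E(w(\tau))\|v_{mt}(\tau)\|^2\,d\tau$. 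Collecting these estimates with those for $I_2$ and relabelling $\gamma$ yields the assertion. The step I expect to need the most care is the construction of $G_\gamma$ — approximating $f$ as a function of $\|\nabla v_m\|^2$ rather than $b_m$ as a function of $\tau$ — because it is precisely this that both supplies the small coefficient in front of $\int_0^t\tau E(w)$ and brings in the $\|v_{mt}\|^2$-weight.
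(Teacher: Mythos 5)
Your argument is correct, and it handles the central technical obstruction by a genuinely different device than the paper. Both proofs begin identically: split off $(f(\|\nabla v_m\|)-f(\|\nabla v_l\|))\Delta v_l\,(v_{mt}-v_{lt})$ into $K^{m,l}$, and reduce the main term to $-\tfrac12\int_0^t\tau\,f(\|\nabla v_m(\tau)\|)\,\tfrac{d}{d\tau}\|\nabla(v_m-v_l)(\tau)\|^2\,d\tau$. The obstruction is that $\tau\mapsto f(\|\nabla v_m(\tau)\|)$ cannot be differentiated where $\nabla v_m$ vanishes. The paper resolves this by decomposing $(0,t)$ into the level sets $A_{1,\varepsilon}^m=\{\|\nabla v_m\|\le\varepsilon\}$ and $A_{2,\varepsilon}^m=\{\|\nabla v_m\|>\varepsilon\}$, writing the open set $A_{2,\varepsilon}^m$ as a countable union of intervals on whose endpoints $f(\|\nabla v_m\|)-f(\varepsilon)$ vanishes, and integrating by parts interval by interval; the small-gradient set contributes the $\gamma$-term via the modulus of continuity of $f$ on $[0,\varepsilon]$, and the quotient $f'(\|\nabla v_m\|)/\|\nabla v_m\|$ is controlled by $\widehat c_1/\varepsilon$ on the large-gradient set. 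You instead regularize the nonlinearity once and for all: writing $f(\|\nabla v_m\|)=G(\|\nabla v_m\|^2)$ with $G(z)=f(\sqrt z)$ and replacing $G$ by a nonnegative $C^1$ uniform approximant $G_\gamma$ makes $b_m^\gamma=G_\gamma(\|\nabla v_m\|^2)$ Lipschitz in $\tau$ (the inner function is Lipschitz with derivative $-2\langle\Delta v_m,v_{mt}\rangle$, no division by $\|\nabla v_m\|$ ever occurs), so a single global integration by parts suffices; the uniform approximation error supplies the $\gamma\int\tau E$ term and $\|G_\gamma'\|_\infty$ plays the role of the paper's $\widehat c_1/\varepsilon^{\,3}$. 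The two proofs then coincide again: the boundary term is dropped by nonnegativity ($f\ge0$ in the paper, $G_\gamma\ge0$ for you, which is exactly why your upward shift is needed), and Young's inequality converts the $\|v_{mt}\|$-weight into $\tfrac\gamma2 E+c_\gamma\|v_{mt}\|^2E$. Your route buys a cleaner argument -- it avoids the open-set/countable-interval decomposition and the case distinction on whether $\|\nabla v_m(t)\|$ exceeds $\varepsilon$ at the right endpoint -- at the mild cost of invoking Weierstrass approximation and the chain rule for $C^1\circ\mathrm{Lipschitz}$. All the supporting estimates ($\|\nabla w\|^2\le cE(w)$, the dominated-convergence argument for $\limsup_m\limsup_l\|K^{m,l}\|_{C[0,T]}=0$, the final relabelling of $\gamma$) are sound.
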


\begin{proof}
Firstly, we have%
\begin{equation*}
\int\limits_{0}^{t}\int\limits_{%
\mathbb{R}
^{n}}\tau \left( f\left( \left\Vert \nabla v_{m}\left( \tau \right)
\right\Vert _{L^{2}\left(
\mathbb{R}
^{n}\right) }\right) \Delta v_{m}(\tau ,x)-f\left( \left\Vert \nabla
v_{l}\left( \tau \right) \right\Vert _{L^{2}\left(
\mathbb{R}
^{n}\right) }\right) \Delta v_{l}(\tau ,x)\right) \left( v_{mt}\left( \tau
,x\right) -v_{lt}\left( \tau ,x\right) \right) dxd\tau
\end{equation*}%
\begin{equation}
=-\int\limits_{0}^{t}\tau f\left( \left\Vert \nabla v_{m}\left( \tau \right)
\right\Vert _{L^{2}\left(
\mathbb{R}
^{n}\right) }\right) \frac{d}{d\tau }\left( \left\Vert \nabla v_{m}(\tau
)-\nabla v_{l}(\tau )\right\Vert _{L^{2}\left(
\mathbb{R}
^{n}\right) }^{2}\right) dt+K^{m,l}\left( t\right) ,  \tag{2.1}
\end{equation}%
where%
\begin{equation*}
K^{m,l}(t):=\int\limits_{0}^{t}\int\limits_{%
\mathbb{R}
^{n}}\tau \left( f\left( \left\Vert \nabla v_{m}\left( \tau \right)
\right\Vert _{L^{2}\left(
\mathbb{R}
^{n}\right) }\right) -f\left( \left\Vert \nabla v_{l}\left( \tau \right)
\right\Vert _{L^{2}\left(
\mathbb{R}
^{n}\right) }\right) \right) \Delta v_{l}\left( \tau ,x\right) \left(
v_{mt}\left( \tau ,x\right) -v_{lt}\left( \tau ,x\right) \right) dxd\tau
\text{.}
\end{equation*}%
By the conditions of the lemma, we obtain%
\begin{equation}
\underset{m\rightarrow \infty }{\text{ }\lim \sup \text{ }}\underset{%
l\rightarrow \infty }{\lim \sup }\left\Vert K^{m,l}\right\Vert _{C[0,T]}=0%
\text{.}  \tag{2.2}
\end{equation}%
Now, let us estimate the first term on the right side of (2.1). For any $%
\varepsilon >0$, by integration by parts, we have%
\begin{equation*}
\int\limits_{0}^{t}\tau f\left( \left\Vert \nabla v_{m}\left( \tau \right)
\right\Vert _{L^{2}\left(
\mathbb{R}
^{n}\right) }\right) \frac{d}{d\tau }\left( \left\Vert \nabla v_{m}(\tau
)-\nabla v_{l}(\tau )\right\Vert _{L^{2}\left(
\mathbb{R}
^{n}\right) }^{2}\right) d\tau
\end{equation*}%
\begin{equation*}
=\int\limits_{0}^{t}\tau \left( f\left( \left\Vert \nabla v_{m}\left( \tau
\right) \right\Vert _{L^{2}\left(
\mathbb{R}
^{n}\right) }\right) -f\left( \varepsilon \right) \right) \frac{d}{d\tau }%
\left( \left\Vert \nabla v_{m}(\tau )-\nabla v_{l}(\tau )\right\Vert
_{L^{2}\left(
\mathbb{R}
^{n}\right) }^{2}\right) d\tau
\end{equation*}%
\begin{equation*}
+f\left( \varepsilon \right) \int\limits_{0}^{t}\tau \frac{d}{d\tau }\left(
\left\Vert \nabla v_{m}(\tau )-\nabla v_{l}(\tau )\right\Vert _{L^{2}\left(
\mathbb{R}
^{n}\right) }^{2}\right) d\tau
\end{equation*}%
\begin{equation*}
=\int\limits_{A_{1,\varepsilon }^{m}(t)}\tau \left( f\left( \left\Vert
\nabla v_{m}\left( \tau \right) \right\Vert _{L^{2}\left(
\mathbb{R}
^{n}\right) }\right) -f\left( \varepsilon \right) \right) \frac{d}{d\tau }%
\left( \left\Vert \nabla v_{m}(\tau )-\nabla v_{l}(\tau )\right\Vert
_{L^{2}\left(
\mathbb{R}
^{n}\right) }^{2}\right) d\tau
\end{equation*}%
\begin{equation*}
+\int\limits_{A_{2,\varepsilon }^{m}(t)}\tau \left( f\left( \left\Vert
\nabla v_{m}\left( \tau \right) \right\Vert _{L^{2}\left(
\mathbb{R}
^{n}\right) }\right) -f\left( \varepsilon \right) \right) \frac{d}{d\tau }%
\left( \left\Vert \nabla v_{m}(\tau )-\nabla v_{l}(\tau )\right\Vert
_{L^{2}\left(
\mathbb{R}
^{n}\right) }^{2}\right) d\tau
\end{equation*}%
\begin{equation}
+tf\left( \varepsilon \right) \left\Vert \nabla v_{m}(t)-\nabla
v_{l}(t)\right\Vert _{L^{2}\left(
\mathbb{R}
^{n}\right) }^{2}-f\left( \varepsilon \right) \int\limits_{0}^{t}\left\Vert
\nabla v_{m}(\tau )-\nabla v_{l}(\tau )\right\Vert _{L^{2}\left(
\mathbb{R}
^{n}\right) }^{2}d\tau ,  \tag{2.3}
\end{equation}%
where%
\begin{equation*}
A_{1,\varepsilon }^{m}(t):=\left\{ \tau \in (0,t):\left\Vert \nabla
v_{m}(\tau )\right\Vert _{L^{2}\left(
\mathbb{R}
^{n}\right) }\leq \varepsilon \right\} \text{,}
\end{equation*}%
\begin{equation*}
A_{2,\varepsilon }^{m}(t):=\left\{ \tau \in (0,t):\left\Vert \nabla
v_{m}(\tau )\right\Vert _{L^{2}\left(
\mathbb{R}
^{n}\right) }>\varepsilon \right\} \text{.}
\end{equation*}%
Let us estimate the term%
\begin{equation*}
\int\limits_{A_{2,\varepsilon }^{m}(t)}\tau \left( f\left( \left\Vert \nabla
v_{m}\left( \tau \right) \right\Vert _{L^{2}\left(
\mathbb{R}
^{n}\right) }\right) -f\left( \varepsilon \right) \right) \frac{d}{d\tau }%
\left( \left\Vert \nabla v_{m}(\tau )-\nabla v_{l}(\tau )\right\Vert
_{L^{2}\left(
\mathbb{R}
^{n}\right) }^{2}\right) d\tau \text{.}
\end{equation*}%
It is enough to consider the case $A_{2,\varepsilon }^{m}(t)$ is nonempty.
Since, thanks to $v_{m}\in C([0,T];H^{1}\left(
\mathbb{R}
^{n}\right) )$, the set $A_{2,\varepsilon }^{m}(t)$ is open, it can be shown
that it is a countable union of disjoint open intervals $\left\{ (t_{k},%
\widetilde{t}_{k}\right\} _{k=1}^{\infty }$ (see, for example [19, p. 39]).\
Then%
\begin{equation*}
\int\limits_{A_{2,\varepsilon }^{m}(t)}\tau \left( f\left( \left\Vert \nabla
v_{m}\left( \tau \right) \right\Vert _{L^{2}\left(
\mathbb{R}
^{n}\right) }\right) -f\left( \varepsilon \right) \right) \frac{d}{d\tau }%
\left( \left\Vert \nabla v_{m}(\tau )-\nabla v_{l}(\tau )\right\Vert
_{L^{2}\left(
\mathbb{R}
^{n}\right) }^{2}\right) d\tau
\end{equation*}%
\begin{equation*}
=\sum_{k=0}^{\infty }\int\limits_{t_{k}}^{\widetilde{t}_{k}}\tau \left(
f\left( \left\Vert \nabla v_{m}\left( \tau \right) \right\Vert _{L^{2}\left(
\mathbb{R}
^{n}\right) }\right) -f\left( \varepsilon \right) \right) \frac{d}{d\tau }%
\left( \left\Vert \nabla v_{m}(\tau )-\nabla v_{l}(\tau )\right\Vert
_{L^{2}\left(
\mathbb{R}
^{n}\right) }^{2}\right) d\tau .
\end{equation*}%
If $\left\Vert \nabla v_{m}\left( t\right) \right\Vert _{L^{2}\left(
\mathbb{R}
^{n}\right) }\leq \varepsilon $, then by continuity of $f$, we have%
\begin{equation*}
f\left( \left\Vert \nabla v_{m}\left( t_{k}\right) \right\Vert _{L^{2}\left(
\mathbb{R}
^{n}\right) }\right) =f\left( \varepsilon \right) \text{, \ }k=1,2,...\text{
,}
\end{equation*}%
\begin{equation*}
f\left( \left\Vert \nabla v_{m}\left( \widetilde{t}_{k}\right) \right\Vert
_{L^{2}\left(
\mathbb{R}
^{n}\right) }\right) =f\left( \varepsilon \right) \text{, \ }k=1,2,...\text{
.}
\end{equation*}%
Hence, by integration by parts, we obtain%
\begin{equation*}
\int\limits_{A_{2,\varepsilon }^{m}(t)}\tau \left( f\left( \left\Vert \nabla
v_{m}\left( \tau \right) \right\Vert _{L^{2}\left(
\mathbb{R}
^{n}\right) }\right) -f\left( \varepsilon \right) \right) \frac{d}{d\tau }%
\left( \left\Vert \nabla v_{m}(\tau )-\nabla v_{l}(\tau )\right\Vert
_{L^{2}\left(
\mathbb{R}
^{n}\right) }^{2}\right) d\tau
\end{equation*}%
\begin{equation*}
=\sum_{k=1}^{\infty }\widetilde{t}_{k}\left( f\left( \left\Vert \nabla
v_{m}\left( \widetilde{t}_{k}\right) \right\Vert _{L^{2}\left(
\mathbb{R}
^{n}\right) }\right) -f\left( \varepsilon \right) \right) \left\Vert \nabla
v_{m}(\widetilde{t}_{k})-\nabla v_{l}(\widetilde{t}_{k})\right\Vert
_{L^{2}\left(
\mathbb{R}
^{n}\right) }^{2}
\end{equation*}%
\begin{equation*}
-\sum_{k=1}^{\infty }t_{k}\left( f\left( \left\Vert \nabla v_{m}\left(
t_{k}\right) \right\Vert _{L^{2}\left(
\mathbb{R}
^{n}\right) }\right) -f\left( \varepsilon \right) \right) \left\Vert \nabla
v_{m}(t_{k})-\nabla v_{l}(t_{k})\right\Vert _{L^{2}\left(
\mathbb{R}
^{n}\right) }^{2}
\end{equation*}%
\begin{equation*}
-\sum_{k=1}^{\infty }\int\limits_{t_{k}}^{\widetilde{t}_{k}}\left( f\left(
\left\Vert \nabla v_{m}\left( \tau \right) \right\Vert _{L^{2}\left(
\mathbb{R}
^{n}\right) }\right) -f\left( \varepsilon \right) \right) \left\Vert \nabla
v_{m}(\tau )-\nabla v_{l}(\tau )\right\Vert _{L^{2}\left(
\mathbb{R}
^{n}\right) }^{2}d\tau
\end{equation*}%
\begin{equation*}
+\sum_{k=1}^{\infty }\int\limits_{t_{k}}^{\widetilde{t}_{k}}\tau \frac{%
f^{\prime }\left( \left\Vert \nabla v_{m}\left( \tau \right) \right\Vert
_{L^{2}\left(
\mathbb{R}
^{n}\right) }\right) }{\left\Vert \nabla v_{m}\left( \tau \right)
\right\Vert _{L^{2}\left(
\mathbb{R}
^{n}\right) }}\left\langle \Delta v_{m}\left( \tau \right) ,v_{mt}\left(
\tau \right) \right\rangle _{L^{2}\left(
\mathbb{R}
^{n}\right) }\left\Vert \nabla v_{m}(\tau )-\nabla v_{l}(\tau )\right\Vert
_{L^{2}\left(
\mathbb{R}
^{n}\right) }^{2}d\tau
\end{equation*}%
\begin{equation*}
=-\int\limits_{A_{2,\varepsilon }^{m}(t)}\left( f\left( \left\Vert \nabla
v_{m}\left( \tau \right) \right\Vert _{L^{2}\left(
\mathbb{R}
^{n}\right) }\right) -f\left( \varepsilon \right) \right) \left\Vert \nabla
v_{m}(\tau )-\nabla v_{l}(\tau )\right\Vert _{L^{2}\left(
\mathbb{R}
^{n}\right) }^{2}d\tau
\end{equation*}%
\begin{equation}
+\int\limits_{A_{2,\varepsilon }^{m}(t)}\tau \frac{f^{\prime }\left(
\left\Vert \nabla v_{m}\left( \tau \right) \right\Vert _{L^{2}\left(
\mathbb{R}
^{n}\right) }\right) }{\left\Vert \nabla v_{m}\left( \tau \right)
\right\Vert _{L^{2}\left(
\mathbb{R}
^{n}\right) }}\left\langle \Delta v_{m}\left( \tau \right) ,v_{mt}\left(
\tau \right) \right\rangle _{L^{2}\left(
\mathbb{R}
^{n}\right) }\left\Vert \nabla v_{m}(\tau )-\nabla v_{l}(\tau )\right\Vert
_{L^{2}\left(
\mathbb{R}
^{n}\right) }^{2}d\tau ,  \tag{2.4}
\end{equation}%
where $\left\langle \cdot ,\cdot \right\rangle _{L^{2}\left(
\mathbb{R}
^{n}\right) }$ is an inner product in $L^{2}\left(
\mathbb{R}
^{n}\right) $. \ If $\left\Vert \nabla v_{m}\left( t\right) \right\Vert
_{L^{2}\left(
\mathbb{R}
^{n}\right) }>\varepsilon $, then the maximal element of $\left\{ \widetilde{%
t}_{k}:k=1,2,...\right\} $ is equal to $t$ and consequently, we have%
\begin{equation*}
\int\limits_{A_{2,\varepsilon }^{m}(t)}\tau \left( f\left( \left\Vert \nabla
v_{m}\left( \tau \right) \right\Vert _{L^{2}\left(
\mathbb{R}
^{n}\right) }\right) -f\left( \varepsilon \right) \right) \frac{d}{d\tau }%
\left( \left\Vert \nabla v_{m}(\tau )-\nabla v_{l}(\tau )\right\Vert
_{L^{2}\left(
\mathbb{R}
^{n}\right) }^{2}\right) d\tau
\end{equation*}%
\begin{equation*}
=\sum_{k=1}^{\infty }\widetilde{t}_{k}\left( f\left( \left\Vert \nabla
v_{m}\left( \widetilde{t}_{k}\right) \right\Vert _{L^{2}\left(
\mathbb{R}
^{n}\right) }\right) -f\left( \varepsilon \right) \right) \left\Vert \nabla
v_{m}(\widetilde{t}_{k})-\nabla v_{l}(\widetilde{t}_{k})\right\Vert
_{L^{2}\left(
\mathbb{R}
^{n}\right) }^{2}
\end{equation*}%
\begin{equation*}
-\sum_{k=1}^{\infty }t_{k}\left( f\left( \left\Vert \nabla v_{m}\left(
t_{k}\right) \right\Vert _{L^{2}\left(
\mathbb{R}
^{n}\right) }\right) -f\left( \varepsilon \right) \right) \left\Vert \nabla
v_{m}(t_{k})-\nabla v_{l}(t_{k})\right\Vert _{L^{2}\left(
\mathbb{R}
^{n}\right) }^{2}
\end{equation*}%
\begin{equation*}
-\sum_{k=1}^{\infty }\int\limits_{t_{k}}^{\widetilde{t}_{k}}\left( f\left(
\left\Vert \nabla v_{m}\left( \tau \right) \right\Vert _{L^{2}\left(
\mathbb{R}
^{n}\right) }\right) -f\left( \varepsilon \right) \right) \left\Vert \nabla
v_{m}(\tau )-\nabla v_{l}(\tau )\right\Vert _{L^{2}\left(
\mathbb{R}
^{n}\right) }^{2}d\tau
\end{equation*}%
\begin{equation*}
+\sum_{k=1}^{\infty }\int\limits_{t_{k}}^{\widetilde{t}_{k}}\tau \frac{%
f^{\prime }\left( \left\Vert \nabla v_{m}\left( \tau \right) \right\Vert
_{L^{2}\left(
\mathbb{R}
^{n}\right) }\right) }{\left\Vert \nabla v_{m}\left( \tau \right)
\right\Vert _{L^{2}\left(
\mathbb{R}
^{n}\right) }}\left\langle \Delta v_{m}\left( \tau \right) ,v_{mt}\left(
\tau \right) \right\rangle _{L^{2}\left(
\mathbb{R}
^{n}\right) }\left\Vert \nabla v_{m}(\tau )-\nabla v_{l}(\tau )\right\Vert
_{L^{2}\left(
\mathbb{R}
^{n}\right) }^{2}d\tau
\end{equation*}%
\begin{equation*}
=t\left( f\left( \left\Vert \nabla v_{m}\left( t\right) \right\Vert
_{L^{2}\left(
\mathbb{R}
^{n}\right) }\right) -f\left( \varepsilon \right) \right) \left\Vert \nabla
v_{m}(t)-\nabla v_{l}(t)\right\Vert _{L^{2}\left(
\mathbb{R}
^{n}\right) }^{2}
\end{equation*}%
\begin{equation*}
-\int\limits_{A_{2,\varepsilon }^{m}(t)}\left( f\left( \left\Vert \nabla
v_{m}\left( \tau \right) \right\Vert _{L^{2}\left(
\mathbb{R}
^{n}\right) }\right) -f\left( \varepsilon \right) \right) \left\Vert \nabla
v_{m}(\tau )-\nabla v_{l}(\tau )\right\Vert _{L^{2}\left(
\mathbb{R}
^{n}\right) }^{2}d\tau
\end{equation*}%
\begin{equation}
+\int\limits_{A_{2,\varepsilon }^{m}(t)}\tau \frac{f^{\prime }\left(
\left\Vert \nabla v_{m}\left( \tau \right) \right\Vert _{L^{2}\left(
\mathbb{R}
^{n}\right) }\right) }{\left\Vert \nabla v_{m}\left( \tau \right)
\right\Vert _{L^{2}\left(
\mathbb{R}
^{n}\right) }}\left\langle \Delta v_{m}\left( \tau \right) ,v_{mt}\left(
\tau \right) \right\rangle _{L^{2}\left(
\mathbb{R}
^{n}\right) }\left\Vert \nabla v_{m}(\tau )-\nabla v_{l}(\tau )\right\Vert
_{L^{2}\left(
\mathbb{R}
^{n}\right) }^{2}d\tau .  \tag{2.5}
\end{equation}%
Hence, by using (1.4), (2.4) and (2.5) in (2.3), we find%
\begin{equation*}
-\int\limits_{0}^{t}\tau f\left( \left\Vert \nabla v_{m}\left( \tau \right)
\right\Vert _{L^{2}\left(
\mathbb{R}
^{n}\right) }\right) \frac{d}{d\tau }\left( \left\Vert \nabla v_{m}(\tau
)-\nabla v_{l}(\tau )\right\Vert _{L^{2}\left(
\mathbb{R}
^{n}\right) }^{2}\right) d\tau
\end{equation*}%
\begin{equation*}
\leq f\left( \varepsilon \right) \int\limits_{0}^{t}\left\Vert \nabla
v_{m}(\tau )-\nabla v_{l}(\tau )\right\Vert _{L^{2}\left(
\mathbb{R}
^{n}\right) }^{2}d\tau +\widehat{c}_{1}\int\limits_{0}^{t}\left\Vert \nabla
v_{m}(\tau )-\nabla v_{l}(\tau )\right\Vert _{L^{2}\left(
\mathbb{R}
^{n}\right) }^{2}d\tau
\end{equation*}%
\begin{equation*}
+2\max_{s_{1},s_{2}\epsilon \left[ 0,\varepsilon \right] }\left\vert f\left(
s_{1}\right) -f\left( s_{2}\right) \right\vert \int\limits_{0}^{t}\tau
E\left( v_{m}\left( \tau \right) -v_{l}\left( \tau \right) \right) d\tau
\end{equation*}%
\begin{equation*}
+\frac{\widehat{c}_{1}}{\varepsilon }\int\limits_{0}^{t}\tau E\left(
v_{m}\left( \tau \right) -v_{l}\left( \tau \right) \right) \left\Vert
v_{mt}\left( \tau \right) \right\Vert _{L^{2}\left(
\mathbb{R}
^{n}\right) }d\tau
\end{equation*}%
\begin{equation*}
\leq (f\left( \varepsilon \right) +\widehat{c}_{1})\int\limits_{0}^{t}\left%
\Vert \nabla v_{m}(\tau )-\nabla v_{l}(\tau )\right\Vert _{L^{2}\left(
\mathbb{R}
^{n}\right) }^{2}d\tau
\end{equation*}%
\begin{equation*}
+\left( 2\max_{s_{1},s_{2}\epsilon \left[ 0,\varepsilon \right] }\left\vert
f\left( s_{1}\right) -f\left( s_{2}\right) \right\vert +\widehat{c}%
_{1}\varepsilon \right) \int\limits_{0}^{t}\tau E\left( v_{m}\left( \tau
\right) -v_{l}\left( \tau \right) \right) d\tau
\end{equation*}%
\begin{equation}
+\frac{\widehat{c}_{1}}{\varepsilon ^{3}}\int\limits_{0}^{t}\tau E\left(
v_{m}\left( \tau \right) -v_{l}\left( \tau \right) \right) \left\Vert
v_{mt}\left( \tau \right) \right\Vert _{L^{2}\left(
\mathbb{R}
^{n}\right) }^{2}d\tau \text{, \ }\forall t\in \lbrack 0,T]\text{.}
\tag{2.6}
\end{equation}%
Thus, setting $\gamma =2\max\limits_{s_{1},s_{2}\epsilon \left[
0,\varepsilon \right] }\left\vert f\left( s_{1}\right) -f\left( s_{2}\right)
\right\vert +\widehat{c}_{1}\varepsilon $ and $c_{\gamma }=\max \left\{
\frac{\widehat{c}_{1}}{\varepsilon ^{3}},\text{ }f\left( \varepsilon \right)
+\widehat{c}_{1}\right\} $, by (2.1), (2.2) and (2.6), we get the claim of
the lemma.
\end{proof}

Now, let us prove the asymptotic compactness of $\left\{ S(t)\right\}
_{t\geq 0}$ in $H^{2}\left(
\mathbb{R}
^{n}\right) \times L^{2}\left(
\mathbb{R}
^{n}\right) $.

\begin{lemma}
Assume that the conditions (1.3)-(1.4) hold and $B$ is a bounded subset of$\
H^{2}\left(
\mathbb{R}
^{n}\right) \times L^{2}\left(
\mathbb{R}
^{n}\right) $. Then for every sequence of the form $\left\{ S(t_{k})\varphi
_{k}\right\} _{k=1}^{\infty },$ where $\left\{ \varphi _{k}\right\}
_{k=1}^{\infty }\subset B$, $t_{k}\rightarrow \infty ,$ \ has a convergent
subsequence in $H^{2}\left(
\mathbb{R}
^{n}\right) \times L^{2}\left(
\mathbb{R}
^{n}\right) $.
\end{lemma}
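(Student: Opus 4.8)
The plan is to combine the usual weak‑extraction scheme with the time‑weighted estimate of Lemma~2.1 (this is the ``compensated compactness'' device); throughout, $\|\cdot\|$ denotes the $L^{2}(\mathbb{R}^{n})$‑norm and $\langle\cdot,\cdot\rangle$ the corresponding inner product. Let $\{\varphi_{k}\}\subset B$, $t_{k}\to\infty$, and write $(u_{k}(\cdot),u_{kt}(\cdot))=S(\cdot)\varphi_{k}$. By (1.6) the set $\bigcup_{t\ge0}S(t)B$ is bounded in $H^{2}(\mathbb{R}^{n})\times L^{2}(\mathbb{R}^{n})$, say by $R$, and $\int_{0}^{\infty}\|u_{kt}(\tau)\|^{2}\,d\tau\le\alpha_{0}^{-1}c(R)=:C_{0}$ for every $k$ (here the strict positivity $\alpha\ge\alpha_{0}>0$ enters). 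For $m\in\mathbb{N}$ and $k$ large, $w_{k}^{m}(t):=u_{k}(t_{k}-m+t)$, $t\in[0,m]$, is a weak solution of (1.1) on $[0,m]$ with data of norm $\le R$, so $\{w_{k}^{m}\}_{k}$ is bounded in $L^{\infty}(0,m;H^{2}(\mathbb{R}^{n}))\cap W^{1,\infty}(0,m;L^{2}(\mathbb{R}^{n}))$ by a constant independent of $m$, and $\int_{0}^{m}\|w_{kt}^{m}(\tau)\|^{2}\,d\tau\le C_{0}$. Since $\big|\frac{d}{dt}\|\nabla w_{k}^{m}(t)\|^{2}\big|=2\,|\langle\Delta w_{k}^{m},w_{kt}^{m}\rangle|\le C(R)$, the functions $t\mapsto\|\nabla w_{k}^{m}(t)\|$ are equibounded and equicontinuous on $[0,m]$; hence, by the Arzel\`a--Ascoli and Banach--Alaoglu theorems and a diagonal argument over $m$, one may pass to a subsequence (not relabelled) along which, for every $m$, $w_{k}^{m}$ converges weak‑$*$ in the spaces above and $\|\nabla w_{k}^{m}(t)\|$ converges for every $t\in[0,m]$. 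For each fixed $m$ the family $\{w_{k}^{m}\}_{k}$ then satisfies the hypotheses of Lemma~2.1 with $T=m$.

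Now fix $T\in\mathbb{N}$, put $w_{k}:=w_{k}^{T}$, $z_{kl}:=w_{k}-w_{l}$; then $z_{kl}$ solves
\[
z_{kl,tt}+\Delta^{2}z_{kl}+\alpha z_{kl,t}+\lambda z_{kl}=N_{kl}:=f(\|\nabla w_{k}\|)\Delta w_{k}-f(\|\nabla w_{l}\|)\Delta w_{l}
\]
on $[0,T]$, and $(z_{kl}(T),z_{kl,t}(T))=S(t_{k})\varphi_{k}-S(t_{l})\varphi_{l}$. Integrating the energy identity for $z_{kl}$ once in time (equivalently, testing with $\tau z_{kl,t}$) gives
\[
TE(z_{kl}(T))+\int_{0}^{T}\!\!\int_{\mathbb{R}^{n}}\tau\,\alpha(x)|z_{kl,t}|^{2}\,dx\,d\tau=\int_{0}^{T}E(z_{kl}(\tau))\,d\tau+Q_{kl}(T),
\]
where $Q_{kl}(T):=\int_{0}^{T}\!\int_{\mathbb{R}^{n}}\tau N_{kl}\,z_{kl,t}\,dx\,d\tau$ is exactly the quantity bounded in Lemma~2.1. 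Testing the equation for $z_{kl}$ also with $\varepsilon\tau z_{kl}$ ($\varepsilon>0$ small), integrating once in time, adding, and absorbing the $\varepsilon$‑terms into the left‑hand side, I obtain
\[
TE(z_{kl}(T))+c_{1}\!\int_{0}^{T}\!\!\int_{\mathbb{R}^{n}}\tau\big(|z_{kl,t}|^{2}+|\Delta z_{kl}|^{2}+\lambda|z_{kl}|^{2}\big)\,dx\,d\tau\le\int_{0}^{T}E(z_{kl})\,d\tau+Q_{kl}(T)+R_{kl}(T),
\]
with $c_{1}=c_{1}(\varepsilon,\alpha_{0},\lambda)>0$; here $R_{kl}$ collects the boundary and damping cross‑terms together with $\varepsilon\int_{0}^{T}\tau\langle N_{kl},z_{kl}\rangle\,d\tau$. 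Since $\langle\Delta z_{kl},z_{kl}\rangle=-\|\nabla z_{kl}\|^{2}$ and $f\ge0$ by (1.4),
\[
\langle N_{kl},z_{kl}\rangle\le\big(f(\|\nabla w_{k}\|)-f(\|\nabla w_{l}\|)\big)\langle\Delta w_{l},z_{kl}\rangle,
\]
so, using the uniform convergence of $\|\nabla w_{k}(\cdot)\|$ and the uniform bounds, $R_{kl}\in C[0,T]$ with $\limsup_{k\to\infty}\limsup_{l\to\infty}\|R_{kl}\|_{C[0,T]}=0$.

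To close the argument I would invoke Lemma~2.1 for $Q_{kl}(T)$ and then dispose of $\int_{0}^{T}E(z_{kl})\,d\tau$, which does \emph{not} vanish in the double limit (there is no compactness $H^{2}(\mathbb{R}^{n})\hookrightarrow L^{2}(\mathbb{R}^{n})$). For $0<\delta<T$ one has $\int_{0}^{T}E(z_{kl})\,d\tau\le C_{1}\delta+\delta^{-1}\int_{0}^{T}\tau E(z_{kl})\,d\tau$ with $C_{1}:=\sup\{E(z_{kl}(\tau)):k,l,\ \tau\in[0,T]\}\le 4c(R)$. Choosing $\varepsilon$ small, then $\gamma$ small, then $\delta$ large, all the $\int_{0}^{T}\tau E(z_{kl})\,d\tau$ terms arising from Lemma~2.1 and from the split get absorbed into the dissipation term on the left; the one term that cannot be absorbed, $c_{\gamma}\int_{0}^{T}\tau E(z_{kl})\|w_{kt}\|^{2}\,d\tau$, is controlled by a Gronwall inequality using $\int_{0}^{T}\|w_{kt}\|^{2}\,d\tau\le C_{0}$. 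The outcome, valid for all $T>\delta$, is
\[
TE(z_{kl}(T))\le C'\big(1+\|K^{k,l}\|_{C[0,T]}+\|R_{kl}\|_{C[0,T]}\big),
\]
where $K^{k,l}$ is the error term of Lemma~2.1 and $C'$ depends only on $R,\alpha_{0},\lambda$ (and $\|\alpha\|_{L^{\infty}}$, $f$), not on $T,k,l$. Since $\limsup_{k}\limsup_{l}\|K^{k,l}\|_{C[0,T]}=\limsup_{k}\limsup_{l}\|R_{kl}\|_{C[0,T]}=0$, passing to $\limsup_{l}$ and then $\limsup_{k}$ gives $\limsup_{k}\limsup_{l}E(z_{kl}(T))\le C'/T$. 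As $\|(v,w)\|_{H^{2}(\mathbb{R}^{n})\times L^{2}(\mathbb{R}^{n})}^{2}\le c(\lambda)E(v,w)$ and $(z_{kl}(T),z_{kl,t}(T))=S(t_{k})\varphi_{k}-S(t_{l})\varphi_{l}$, it follows that $\limsup_{k}\limsup_{l}\|S(t_{k})\varphi_{k}-S(t_{l})\varphi_{l}\|_{H^{2}(\mathbb{R}^{n})\times L^{2}(\mathbb{R}^{n})}^{2}\le c(\lambda)C'/T$ for every $T\in\mathbb{N}$; letting $T\to\infty$ shows that $\{S(t_{k})\varphi_{k}\}$ is Cauchy, hence convergent, in $H^{2}(\mathbb{R}^{n})\times L^{2}(\mathbb{R}^{n})$.

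The hard part is precisely this last step. The energy of the difference does not decay, and the weak limit of the $w_{k}$ solves (1.1) with $f(\|\nabla w_{k}\|)$ replaced by the possibly different coefficient $f(\lim_{k}\|\nabla w_{k}\|)$, so strong convergence cannot be read off at one fixed time; the time weight $\tau$ kills the non‑vanishing contribution of the (only weakly convergent) initial data, the sign condition $f\ge0$ removes the dangerous forcing term from the second multiplier, the uniform‑in‑$T$ dissipation bound $C_{0}$ — hence $\alpha\ge\alpha_{0}>0$ — lets the weighted energy be closed by Gronwall, and sending $T\to\infty$ eliminates the residual constant $C'$. Making this bookkeeping actually close is the essence of the compensated‑compactness method.
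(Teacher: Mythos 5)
Your argument is correct and follows essentially the same route as the paper: the time-shifted solutions, the multipliers $\tau z_{kl,t}$ and $\varepsilon\tau z_{kl}$, Lemma 2.1 for the nonlocal term, Gronwall with $\int_{0}^{\infty}\left\Vert u_{t}\right\Vert _{L^{2}(\mathbb{R}^{n})}^{2}dt<\infty$ (which is where $\alpha\geq\alpha_{0}>0$ enters), and the conclusion $E\left( z_{kl}(T)\right) \lesssim 1/T$ in the iterated limit. The only (harmless) deviations are that you dispose of $\int_{0}^{T}E\left( z_{kl}\right) d\tau$ by the split $\int_{0}^{T}E\leq C_{1}\delta+\delta^{-1}\int_{0}^{T}\tau E$ and absorption, where the paper instead bounds it uniformly in $T$ via the multiplier $v_{m}-v_{l}$ (its estimate (2.10)), and that your diagonal extraction over $m$ produces a single subsequence valid for all $T$, so you obtain $\limsup_{k}\limsup_{l}$ of the differences equal to zero and hence the Cauchy property directly, whereas the paper extracts a subsequence for each $T$ and must finish with the $\liminf$-based completeness argument of [20, Lemma 3.4].
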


\begin{proof}
Since $\left\{ \varphi _{k}\right\} _{k=1}^{\infty }$ is bounded in $%
H^{2}\left(
\mathbb{R}
^{n}\right) \times L^{2}\left(
\mathbb{R}
^{n}\right) $, by (1.3), (1.4) and (1.6) it follows that the sequence $%
\left\{ S\left( .\right) \varphi _{k}\right\} _{k=1}^{\infty }$ is bounded
in $C_{b}\left( 0,\infty ;H^{2}\left(
\mathbb{R}
^{n}\right) \times L^{2}\left(
\mathbb{R}
^{n}\right) \right) $, where $C_{b}\left( 0,\infty ;H^{2}\left(
\mathbb{R}
^{n}\right) \times L^{2}\left(
\mathbb{R}
^{n}\right) \right) $ is the space of continuously bounded functions from $%
\left[ 0,\infty \right) $ to $H^{2}\left(
\mathbb{R}
^{n}\right) \times L^{2}\left(
\mathbb{R}
^{n}\right) $. Then for any $T\geq 0$ \ there exists a subsequence $\left\{
k_{m}\right\} _{m=1}^{\infty }$ such that $t_{k_{m}}\geq T$, and
\begin{equation}
\begin{array}{c}
\left\Vert \nabla v_{m}\left( t\right) \right\Vert _{L^{2}\left(
\mathbb{R}
^{n}\right) }^{2}\rightarrow q\left( t\right) \text{ weakly star in }%
W^{1,\infty }\left( 0,\infty \right) \text{,}%
\end{array}
\tag{2.7}
\end{equation}
for some $q\in W^{1,\infty }\left( 0,\infty \right) $, where $\left(
v_{m}(t\right) ,v_{mt}\left( t\right) )=S(t+t_{k_{m}}-T)\varphi _{k_{m}}$.%
\newline
Taking into account (1.4) and (1.6), we find%
\begin{equation}
\int\limits_{0}^{T}\left\Vert v_{mt}(t)\right\Vert _{L^{2}\left(
\mathbb{R}
^{n}\right) }^{2}dt\leq c_{1}\text{, \ \ \ }\forall T\geq 0\text{.}
\tag{2.8}
\end{equation}
By (1.1)$_{1},$ we have%
\begin{equation*}
v_{mtt}(t,x)-v_{ltt}(t,x)+\Delta ^{2}\left( v_{m}(t,x)-v_{l}(t,x)\right)
+\alpha (x)\left( v_{mt}(t,x)-v_{lt}(t,x)\right) +\lambda \left(
v_{m}(t,x)-v_{l}(t,x)\right)
\end{equation*}%
\begin{equation}
-f(\left\Vert \nabla v_{m}\left( t\right) \right\Vert _{L^{2}\left(
\mathbb{R}
^{n}\right) })\Delta v_{m}(t,x)+f(\left\Vert \nabla v_{l}\left( t\right)
\right\Vert _{L^{2}\left(
\mathbb{R}
^{n}\right) })\Delta v_{l}(t,x)=0\text{.}  \tag{2.9}
\end{equation}
Multiplying (2.9) by $(v_{m}-v_{l})$ and integrating over $\left( 0,T\right)
\times
\mathbb{R}
^{n}$, we get%
\begin{equation*}
\int\limits_{0}^{T}\left\Vert \Delta \left( v_{m}\left( t\right)
-v_{l}\left( t\right) \right) \right\Vert _{L^{2}\left(
\mathbb{R}
^{n}\right) }^{2}dt+\lambda \int\limits_{0}^{T}\left\Vert v_{m}\left(
t\right) -v_{l}\left( t\right) \right\Vert _{L^{2}\left(
\mathbb{R}
^{n}\right) }^{2}dt
\end{equation*}%
\begin{equation*}
+\int\limits_{0}^{T}f(\left\Vert \nabla v_{m}\left( t\right) \right\Vert
_{L^{2}\left(
\mathbb{R}
^{n}\right) })\left\Vert \nabla v_{m}\left( t\right) -\nabla v_{l}\left(
t\right) \right\Vert _{L^{2}\left(
\mathbb{R}
^{n}\right) }^{2}dt\leq c_{2}+c_{2}\int\limits_{0}^{T}\left\Vert
v_{mt}(t)-v_{lt}(t)\right\Vert _{L^{2}\left(
\mathbb{R}
^{n}\right) }^{2}dt
\end{equation*}%
\begin{equation*}
+\int\limits_{0}^{T}\left\vert f(\left\Vert \nabla v_{m}\left( t\right)
\right\Vert _{L^{2}\left(
\mathbb{R}
^{n}\right) })-f(\left\Vert \nabla v_{l}\left( t\right) \right\Vert
_{L^{2}\left(
\mathbb{R}
^{n}\right) })\right\vert \left\Vert \nabla v_{l}(t,x)\right\Vert
_{L^{2}\left(
\mathbb{R}
^{n}\right) }\left\Vert \nabla v_{m}\left( t\right) -\nabla v_{l}\left(
t\right) \right\Vert _{L^{2}\left(
\mathbb{R}
^{n}\right) }dt.
\end{equation*}%
Taking into account (1.4), (2.7) and (2.8) in the last inequality and
passing to the limit, we obtain%
\begin{equation}
\underset{m\rightarrow \infty }{\lim \sup }\text{ }\underset{l\rightarrow
\infty }{\lim \sup }\int\limits_{0}^{T}E\left( v_{m}\left( t\right)
-v_{l}\left( t\right) \right) dt\leq c_{3},\text{ \ \ \ }\forall T\geq 0.
\tag{2.10}
\end{equation}

Multiplying (2.9) by $t\left( v_{mt}-v_{lt}\right) $, integrating over $%
\left( 0,T\right) \times
\mathbb{R}
^{n}$ and using integration by parts, by (1.3), we find%
\begin{equation*}
T\text{ }E\left( v_{m}\left( T\right) -v_{l}\left( T\right) \right) +\alpha
_{0}\int\limits_{0}^{T}t\left\Vert v_{mt}\left( T\right) -v_{lt}\left(
T\right) \right\Vert _{L^{2}\left(
\mathbb{R}
^{n}\right) }^{2}dt\leq \int\limits_{0}^{T}E\left( v_{m}\left( t\right)
-v_{l}\left( t\right) \right) dt
\end{equation*}%
\begin{equation*}
+\int\limits_{0}^{T}\int\limits_{%
\mathbb{R}
^{n}}t\left( f(\left\Vert \nabla v_{m}\left( t\right) \right\Vert
_{L^{2}\left(
\mathbb{R}
^{n}\right) })\Delta v_{m}(t,x)-f(\left\Vert \nabla v_{l}\left( t\right)
\right\Vert _{L^{2}\left(
\mathbb{R}
^{n}\right) })\Delta v_{l}(t,x)\right) \left( v_{mt}\left( t,x\right)
-v_{lt}\left( t,x\right) \right) dxdt\text{,}
\end{equation*}%
which, together with Lemma 2.1, gives%
\begin{equation*}
T\text{ }E\left( v_{m}\left( T\right) -v_{l}\left( T\right) \right) +\alpha
_{0}\int\limits_{0}^{T}t\left\Vert v_{mt}\left( T\right) -v_{lt}\left(
T\right) \right\Vert _{L^{2}\left(
\mathbb{R}
^{n}\right) }^{2}dt
\end{equation*}%
\begin{equation*}
\leq \int\limits_{0}^{T}E\left( v_{m}\left( t\right) -v_{l}\left( t\right)
\right) dt+\gamma \int\limits_{0}^{T}tE\left( v_{m}\left( t\right)
-v_{l}\left( t\right) \right) dt+c_{\gamma }\int\limits_{0}^{T}E\left(
v_{m}\left( t\right) -v_{l}\left( t\right) \right) dt
\end{equation*}%
\begin{equation}
+c_{\gamma }\int\limits_{0}^{T}tE\left( v_{m}\left( t\right) -v_{l}\left(
t\right) \right) \left\Vert v_{mt}\left( t\right) \right\Vert _{L^{2}\left(
\mathbb{R}
^{n}\right) }^{2}dt+K^{m,l}(T)\text{, }  \tag{2.11}
\end{equation}%
for every $\gamma >0$. \ Multiplying (2.9) by $\varepsilon t\left(
v_{m}-v_{l}\right) $ and integrating over $\left( 0,T\right) \times
\mathbb{R}
^{n}$, we get%
\begin{equation*}
\varepsilon \int\limits_{0}^{T}t\left\Vert \Delta \left( v_{m}\left(
t\right) -v_{l}\left( t\right) \right) \right\Vert _{L^{2}\left(
\mathbb{R}
^{n}\right) }^{2}dt+\varepsilon \lambda \int\limits_{0}^{T}t\left\Vert
v_{m}\left( t\right) -v_{l}\left( t\right) \right\Vert _{L^{2}\left(
\mathbb{R}
^{n}\right) }^{2}dt
\end{equation*}%
\begin{equation*}
\leq \varepsilon c_{3}TE\left( v_{m}\left( T\right) -v_{l}\left( T\right)
\right) +\varepsilon \int\limits_{0}^{T}t\left\Vert v_{mt}\left( t\right)
-v_{lt}\left( t\right) \right\Vert _{L^{2}\left(
\mathbb{R}
^{n}\right) }^{2}dt
\end{equation*}%
\begin{equation}
+\varepsilon c_{3}\int\limits_{0}^{T}\left\Vert v_{m}\left( t\right)
-v_{l}\left( t\right) \right\Vert _{L^{2}\left(
\mathbb{R}
^{n}\right) }^{2}dt+\varepsilon \widetilde{K}^{m,l}\left( T\right) ,
\tag{2.12}
\end{equation}%
where%
\begin{equation*}
\widetilde{K}^{m,l}\left( t\right) :=\int\limits_{0}^{t}\tau \left(
f(\left\Vert \nabla v_{m}\left( \tau \right) \right\Vert _{L^{2}\left(
\mathbb{R}
^{n}\right) }-f(\left\Vert \nabla v_{l}\left( \tau \right) \right\Vert
_{L^{2}\left(
\mathbb{R}
^{n}\right) }\right) \left\Vert \nabla v_{l}\left( \tau \right) \right\Vert
_{L^{2}\left(
\mathbb{R}
^{n}\right) }\left\Vert v_{m}\left( \tau \right) -v_{l}\left( \tau \right)
\right\Vert _{L^{2}\left(
\mathbb{R}
^{n}\right) }d\tau \text{,}
\end{equation*}%
and by (1.4) and (2.7), it is easy to see that
\begin{equation*}
\underset{m\rightarrow \infty }{\lim \sup }\text{ }\underset{l\rightarrow
\infty }{\lim \sup \text{ }}\left\Vert \widetilde{K}^{m,l}\right\Vert
_{C[0,T]}=0.
\end{equation*}%
Adding (2.11) to (2.12) and choosing $\gamma $ and $\varepsilon $ small
enough, we obtain%
\begin{equation*}
T\text{ }E\left( v_{m}\left( T\right) -v_{l}\left( T\right) \right) \leq
c_{4}\int\limits_{0}^{T}tE\left( v_{m}\left( t\right) -v_{l}\left( t\right)
\right) \left\Vert v_{mt}\left( t\right) \right\Vert _{L^{2}\left(
\mathbb{R}
^{n}\right) }^{2}dt
\end{equation*}%
\begin{equation*}
+c_{4}\int\limits_{0}^{T}E\left( v_{m}\left( t\right) -v_{l}\left( t\right)
\right) dt+c_{4}\left\vert K^{m,l}(T)\right\vert +c_{4}\left\vert \widetilde{%
K}^{m,l}(T)\right\vert ,\text{ \ \ \ }\forall T\geq 0.
\end{equation*}%
Now, denoting $y_{m,l}(t):=tE\left( v_{m}\left( t\right) -v_{l}\left(
t\right) \right) $ and applying Gronwall inequality, we get%
\begin{equation*}
y_{m,l}(T)
\end{equation*}%
\begin{equation*}
\leq c_{4}\left( \int\limits_{0}^{T}E\left( v_{m}\left( t\right)
-v_{l}\left( t\right) \right) dt+\left\Vert K^{m,l}\right\Vert
_{C[0,T]}+\left\Vert \widetilde{K}^{m,l}\right\Vert _{C[0,T]}\right)
e^{\int\limits_{0}^{T}\left\Vert v_{mt}\left( t\right) \right\Vert
_{L^{2}\left(
\mathbb{R}
^{n}\right) }^{2}dt},
\end{equation*}%
which, together with (2.8), yields%
\begin{equation*}
T\text{ }E\left( v_{m}\left( T\right) -v_{l}\left( T\right) \right) \leq
c_{5}\left( \int\limits_{0}^{T}E\left( v_{m}\left( t\right) -v_{l}\left(
t\right) \right) dt+\left\Vert K^{m,l}\right\Vert _{C[0,T]}+\left\Vert
\widetilde{K}^{m,l}\right\Vert _{C[0,T]}\right) ,
\end{equation*}%
for every $T\geq 0$. Passing to the limit in the above inequality and taking
into account (2.10), we find

\begin{equation*}
\underset{m\rightarrow \infty }{\lim \sup \text{ }}\underset{l\rightarrow
\infty }{\lim \sup }\text{ }T\text{ }E\left( v_{m}\left( T\right)
-v_{l}\left( T\right) \right) \leq c_{6},\text{ \ \ }\forall T\geq 0,
\end{equation*}%
which gives
\begin{equation*}
\underset{m\rightarrow \infty }{\lim \sup }\text{ }\underset{l\rightarrow
\infty }{\lim \sup }\left\Vert S(t_{k_{m}})\varphi
_{k_{m}}-S(t_{k_{l}})\varphi _{k_{l}}\right\Vert _{H^{2}\left(
\mathbb{R}
^{n}\right) \times L^{2}\left(
\mathbb{R}
^{n}\right) }\leq \frac{c_{7}}{\sqrt{T}},\text{ \ \ \ }\forall T>0.
\end{equation*}%
Consequently, we have%
\begin{equation*}
\underset{l\rightarrow \infty }{\lim \inf }\text{ }\underset{m\rightarrow
\infty }{\lim \inf }\left\Vert S(t_{k})\varphi _{k}-S(t_{m})\varphi
_{m}\right\Vert _{H^{2}\left(
\mathbb{R}
^{n}\right) \times L^{2}\left(
\mathbb{R}
^{n}\right) }=0\text{.}
\end{equation*}%
Thus, by using the argument at the end of the proof of [20, Lemma 3.4], we
complete the proof of the lemma.
\end{proof}

Since, by (1.3) and (1.5), problem (1.1)-(1.2) admidts a strict Lyapunov
function%
\begin{equation*}
\Phi \left( u\left( t\right) \right) =E\left( u\left( t\right) \right) +%
\frac{1}{2}F\left( \left\Vert \nabla u\left( t\right) \right\Vert
_{L^{2}\left(
\mathbb{R}
^{n}\right) }^{2}\right) -\int\limits_{%
\mathbb{R}
^{n}}h\left( x\right) u\left( t,x\right) dx,
\end{equation*}%
applying [18, Corollary 7.5.7], we have the following theorem.

\begin{theorem}
Under conditions (1.3)-(1.6) the semigroup $\left\{ S\left( t\right)
\right\} _{t\geq 0}$ generated by the problem (1.1)-(1.2) possesses a global
attractor $\mathcal{A}$ in $H^{2}\left(
\mathbb{R}
^{n}\right) \times L^{2}\left(
\mathbb{R}
^{n}\right) $ and $\mathcal{A=M}^{u}\left( \mathcal{N}\right) $.
\end{theorem}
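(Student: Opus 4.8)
The plan is to verify the hypotheses of the abstract criterion [18, Corollary 7.5.7] for gradient systems and then invoke it. Three things must be checked: (i) $\{S(t)\}_{t\ge 0}$ possesses a strict Lyapunov function on $X:=H^{2}(\mathbb{R}^{n})\times L^{2}(\mathbb{R}^{n})$; (ii) the set $\mathcal{N}$ of stationary points is bounded in $X$; (iii) $\{S(t)\}_{t\ge 0}$ is asymptotically compact. Granting (i)--(iii), the corollary gives at once the existence of the compact global attractor $\mathcal{A}$ and the identity $\mathcal{A}=\mathcal{M}^{u}(\mathcal{N})$.

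For (i) I would take $\Phi$ to be the functional written just above the statement. Its continuity on $X$ is immediate, since $E(\cdot)$ and $u\mapsto\int_{\mathbb{R}^{n}}h\,u\,dx$ are continuous, $z\mapsto F(z)$ is $C^{1}$ by (1.4), and $u\mapsto\|\nabla u\|^{2}_{L^{2}(\mathbb{R}^{n})}$ is continuous on $H^{2}(\mathbb{R}^{n})$. Taking $s=0$ in the energy equality (1.5) and using $\alpha(\cdot)\ge\alpha_{0}>0$ from (1.3) gives $\Phi(S(t)\varphi)\le\Phi(S(s)\varphi)$ for all $t\ge s\ge 0$, so $\Phi$ decreases along trajectories; and if $\Phi(S(t)\varphi)=\Phi(\varphi)$ for every $t\ge 0$, then (1.5) forces $\int_{0}^{t}\int_{\mathbb{R}^{n}}\alpha(x)\,|u_{t}(\tau,x)|^{2}\,dx\,d\tau=0$ for all $t$, hence $u_{t}\equiv 0$ by (1.3), and so $u(t)\equiv\varphi$ is a stationary point. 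This last step is exactly where the strict positivity of $\alpha$ enters (cf.\ Remark 2.2), and it shows $\Phi$ is a \emph{strict} Lyapunov function.

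For (ii) I would note that a stationary point is of the form $(u,0)$ with $u\in H^{2}(\mathbb{R}^{n})$ solving, in the weak sense, $\Delta^{2}u+\lambda u-f(\|\nabla u\|_{L^{2}(\mathbb{R}^{n})})\Delta u=h$. Using $u$ itself as test function and integrating over $\mathbb{R}^{n}$ yields $\|\Delta u\|^{2}_{L^{2}(\mathbb{R}^{n})}+\lambda\|u\|^{2}_{L^{2}(\mathbb{R}^{n})}+f(\|\nabla u\|_{L^{2}(\mathbb{R}^{n})})\|\nabla u\|^{2}_{L^{2}(\mathbb{R}^{n})}=\int_{\mathbb{R}^{n}}h\,u\,dx$. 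Since $f\ge 0$ by (1.4) and $\int_{\mathbb{R}^{n}}h\,u\,dx\le\frac{\lambda}{2}\|u\|^{2}_{L^{2}(\mathbb{R}^{n})}+\frac{1}{2\lambda}\|h\|^{2}_{L^{2}(\mathbb{R}^{n})}$ by Young's inequality, one gets $\|\Delta u\|^{2}_{L^{2}(\mathbb{R}^{n})}+\frac{\lambda}{2}\|u\|^{2}_{L^{2}(\mathbb{R}^{n})}\le\frac{1}{2\lambda}\|h\|^{2}_{L^{2}(\mathbb{R}^{n})}$, which bounds $\mathcal{N}$ in $X$, since $\|\Delta u\|^{2}_{L^{2}(\mathbb{R}^{n})}+\|u\|^{2}_{L^{2}(\mathbb{R}^{n})}$ is an equivalent norm on $H^{2}(\mathbb{R}^{n})$.

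Condition (iii) is precisely Lemma 2.2, and the boundedness of $\{S(t)\}_{t\ge 0}$ on bounded sets -- also needed so that we are in the framework of the abstract theorem -- is furnished by (1.6). Once all three are in place, [18, Corollary 7.5.7] yields the theorem. The main obstacle is of course (iii); it is handled in Lemmas 2.1 and 2.2 through the compensated compactness argument, and the remaining verifications above are routine consequences of the energy equality (1.5), of hypotheses (1.3)--(1.4), and of the short elliptic estimate for $\mathcal{N}$.
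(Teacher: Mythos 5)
Your proposal is correct and follows essentially the same route as the paper: the paper's proof consists precisely of exhibiting the strict Lyapunov function $\Phi$ via the energy identity (1.5) and condition (1.3), combining it with the asymptotic compactness of Lemma 2.2, and invoking [18, Corollary 7.5.7]. You merely spell out the routine verifications (continuity and strictness of $\Phi$, boundedness of $\mathcal{N}$) that the paper leaves implicit, and these are all sound.
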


\section{Regularity of the global attractor}

To prove the regularity of the global attractor, we start with the following
lemma.

\begin{lemma}
Assume that the conditions (1.3) and (1.4) hold and $B$ is a bounded subset
in $H^{4}\left(
\mathbb{R}
^{n}\right) \times H^{2}\left(
\mathbb{R}
^{n}\right) $. Then there exists a constant $C>0$ such that%
\begin{equation*}
\sup_{\varphi \in B}\left\Vert S\left( t\right) \varphi \right\Vert
_{H^{4}\left(
\mathbb{R}
^{n}\right) \times H^{2}\left(
\mathbb{R}
^{n}\right) }\leq C
\end{equation*}%
for all $t\geq 0$.
\end{lemma}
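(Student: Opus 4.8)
The plan is to work with strong solutions and run a higher-order energy estimate, differentiating the equation in time. Set $w=u_t$; then $w$ solves the linearized-type equation
\[
w_{tt}+\Delta^2 w+\alpha(x)w_t+\lambda w-f(\|\nabla u(t)\|_{L^2(\mathbb{R}^n)})\Delta w
= f'(\|\nabla u(t)\|_{L^2(\mathbb{R}^n)})\frac{\langle\nabla u(t),\nabla u_t(t)\rangle_{L^2(\mathbb{R}^n)}}{\|\nabla u(t)\|_{L^2(\mathbb{R}^n)}}\,\Delta u,
\]
obtained by applying $\partial_t$ to (1.1). Introduce the second-order energy $E_1(t):=\tfrac12\big(\|w_t(t)\|_{L^2}^2+\|\Delta w(t)\|_{L^2}^2+\lambda\|w(t)\|_{L^2}^2\big)$ and multiply the $w$-equation by $w_t=u_{tt}$, integrating over $\mathbb{R}^n$. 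The damping term gives $\int\alpha|w_t|^2\ge\alpha_0\|w_t\|_{L^2}^2\ge0$, the nonlocal term $f(\cdot)\Delta w$ contributes $-f(\|\nabla u\|)\,\tfrac{d}{dt}\big(\tfrac12\|\nabla w\|_{L^2}^2\big)$ which is handled exactly as in Lemma 2.1 / the proof of Lemma 2.2 (splitting on the sets where $\|\nabla u\|$ is small or bounded below, integrating by parts in $t$, and using $f\ge0$), and the right-hand side forcing term is bounded by $C(1+\|\Delta u\|_{L^2})\,\|\nabla w\|_{L^2}\,\|w_t\|_{L^2}\le C\,E_1(t)$ after using the a priori bound (1.6) on $\|\Delta u\|_{L^2}$ and $\|\nabla u\|_{L^2}$ and interpolating $\|\nabla w\|_{L^2}^2\le\|w\|_{L^2}\|\Delta w\|_{L^2}$. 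This yields a differential inequality of the form $\frac{d}{dt}E_1(t)+\alpha_0\|w_t(t)\|_{L^2}^2\le c\,E_1(t)+c\,\psi(t)E_1(t)$ for a locally integrable $\psi$ controlled by (1.6); however this only gives exponential-in-$t$ growth, not a uniform bound, so an extra idea is needed.

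To upgrade to a \emph{uniform} (in $t$) bound one uses the dissipativity built into the equation. The standard device is a perturbed functional $\Lambda(t)=E_1(t)+\delta\int_{\mathbb{R}^n}w_t\,w\,dx$ for small $\delta>0$; multiplying the $w$-equation additionally by $\delta w$ produces the coercive terms $\delta\|\Delta w\|_{L^2}^2+\delta\lambda\|w\|_{L^2}^2+\delta f(\|\nabla u\|)\|\nabla w\|_{L^2}^2$ on the good side, against $\delta\|w_t\|_{L^2}^2$ and the $\alpha$-cross term $\delta\int\alpha w_t w$ (absorbed by Cauchy–Schwarz into the damping and the coercive terms). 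The forcing term is again absorbed using (1.6). One arrives at
\[
\frac{d}{dt}\Lambda(t)+\kappa\,\Lambda(t)\le C+C\,\psi(t)\,\Lambda(t),\qquad t\ge0,
\]
with $\kappa>0$, $C$ depending only on $\|\varphi\|_{H^4\times H^2}$-bound of $B$ and on the constant in (1.6), and $\psi\ge0$ with $\int_0^\infty\psi(t)\,dt<\infty$ (indeed $\psi$ is built from $\|\nabla u_t\|_{L^2}$, and $\int_0^\infty\|u_t\|_{L^2}^2<\infty$ by (1.6), while $\|\nabla u_t\|_{L^2}^2\le\|u_t\|_{L^2}\|\Delta u_t\|_{L^2}$ is controlled once $\Lambda$ is; this is handled by a bootstrap/continuity argument on the maximal interval where $\Lambda$ stays below a fixed large constant). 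Then Gronwall gives $\Lambda(t)\le\big(\Lambda(0)+C/\kappa\big)e^{\int_0^\infty\psi}\le C'$ uniformly in $t\ge0$, hence $\|u_t(t)\|_{L^2}^2+\|u_{tt}(t)\|_{L^2}^2+\|\Delta u_t(t)\|_{L^2}^2\le C'$ for all $t\ge0$ and all initial data in $B$.

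Finally, to pass from control of $u_t$ to the full $H^4$-bound on $u$, rewrite (1.1) as an elliptic equation for $u(t)$ at each fixed $t$:
\[
\Delta^2 u(t)=h-u_{tt}(t)-\alpha(\cdot)u_t(t)-\lambda u(t)+f(\|\nabla u(t)\|_{L^2(\mathbb{R}^n)})\Delta u(t).
\]
The right-hand side is bounded in $L^2(\mathbb{R}^n)$ uniformly in $t$: $h\in L^2$ by hypothesis, $\|u_{tt}(t)\|_{L^2}$ and $\|u_t(t)\|_{L^2}$ are bounded by the previous step (and $\alpha\in L^\infty$), $\|u(t)\|_{L^2}$ and $\|\Delta u(t)\|_{L^2}$ by (1.6), and $f(\|\nabla u(t)\|_{L^2})$ is bounded because $\|\nabla u(t)\|_{L^2}$ ranges in a fixed bounded set (again (1.6)) and $f\in C^1$. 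Elliptic regularity for $\Delta^2$ on $\mathbb{R}^n$ then gives $\|u(t)\|_{H^4(\mathbb{R}^n)}\le C\|\Delta^2 u(t)\|_{L^2(\mathbb{R}^n)}+C\|u(t)\|_{L^2(\mathbb{R}^n)}\le C$ uniformly in $t\ge0$, which together with the bound on $\|u_t(t)\|_{H^2}$ completes the proof. The main obstacle is the second paragraph: producing a genuinely uniform-in-time bound rather than an exponentially growing one, which requires the dissipative perturbed functional together with the observation that the ``bad'' coefficient $\psi$ is integrable in time thanks to the damping identity (1.6) — and a short continuity argument to close the a priori estimate.
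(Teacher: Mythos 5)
Your overall architecture --- differentiate the equation in time, run a second-order energy estimate with the multiplier $w_t+\delta w$, and finish with elliptic regularity for $\Delta^2$ --- is exactly the paper's (the paper works with the difference quotient $v=(u(t+\tau,\cdot)-u(t,\cdot))/\tau$ rather than $w=u_t$, which is only a technical device to avoid $u_{ttt}$), and the elliptic step at the end is identical. The gap is in the middle, precisely at the point you yourself flag as the main obstacle: your uniform-in-time bound rests on the claim that the coefficient $\psi$, built from $\left\Vert \nabla u_{t}\right\Vert _{L^{2}}$, satisfies $\int_{0}^{\infty }\psi \,dt<\infty $. This does not follow from what you have. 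From (1.6) you only know $\int_{0}^{\infty }\left\Vert u_{t}\right\Vert _{L^{2}}^{2}dt<\infty $, and even granting (by bootstrap) a uniform bound on $\left\Vert \Delta u_{t}\right\Vert _{L^{2}}$, interpolation gives $\left\Vert \nabla u_{t}\right\Vert _{L^{2}}\leq \left\Vert u_{t}\right\Vert _{L^{2}}^{1/2}\left\Vert \Delta u_{t}\right\Vert _{L^{2}}^{1/2}\leq C\left\Vert u_{t}\right\Vert _{L^{2}}^{1/2}$, so $\left\Vert \nabla u_{t}\right\Vert _{L^{2}}$ is only known to lie in $L^{4}(0,\infty )$, not in $L^{1}$ or $L^{2}$ (take $\left\Vert u_{t}(t)\right\Vert _{L^{2}}\sim (1+t)^{-1}$, which is consistent with (1.6), to see that $\int \left\Vert u_{t}\right\Vert ^{1/2}$ and $\int \left\Vert u_{t}\right\Vert $ may diverge). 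Moreover, your first differential inequality already contains a term $c\,E_{1}(t)$ with a fixed constant $c$ coming from the bound $C\left\Vert \nabla w\right\Vert \left\Vert w_{t}\right\Vert \leq CE_{1}$; since the dissipation rate $\kappa $ produced by the small multiplier $\delta w$ is small, there is no reason to have $c<\kappa $, so this linear term cannot be absorbed and the Gronwall step as written still yields exponential growth.

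The missing idea --- the one the paper uses --- is that $\left\Vert v(t)\right\Vert _{L^{2}}\leq \sup_{s}\left\Vert u_{t}(s)\right\Vert _{L^{2}}\leq C$ uniformly in $t$ by (1.6). This makes every problematic term \emph{sublinear} in the second-order energy: $\left\Vert \nabla v\right\Vert _{L^{2}}^{2}\leq \left\Vert v\right\Vert _{L^{2}}\left\Vert v\right\Vert _{H^{2}}\leq C\,E(v)^{1/2}$, so the forcing contributes $C\left\Vert \nabla u_{t}\right\Vert E(v)^{1/2}\leq \epsilon E(v)+C_{\epsilon }\left\Vert \nabla u_{t}\right\Vert ^{2}$, and the term coming from $\frac{d}{dt}f(\left\Vert \nabla u\right\Vert )$ times $\left\Vert \nabla v\right\Vert ^{2}$ is likewise sublinear. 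One then arrives at $\frac{d}{dt}\Psi +c_{7}\Psi \leq c_{6}+c_{6}\left\Vert \nabla u_{t}\right\Vert ^{2}$ with an \emph{additive} right-hand side; the paper closes by bounding $\left\Vert \nabla u_{t}\right\Vert ^{2}\leq \left\Vert u_{t}\right\Vert _{L^{2}}\left\Vert u_{t}\right\Vert _{H^{2}}\leq C\sup_{[0,T]}\left\Vert u_{t}\right\Vert _{H^{2}}$, integrating against $e^{-c_{7}(t-s)}$, taking the supremum over $[0,T]$ and applying Young's inequality (from $X^{2}\leq c+cX$ one gets $X\leq C$), so no integrability of a time-dependent coefficient and no continuity argument are needed. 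As a side remark, invoking the splitting of Lemma 2.1 for the nonlocal term is both unnecessary and not directly applicable here (that lemma treats differences of two distinct solutions); for a single solution one simply absorbs the nonnegative quantity $\frac{1}{2}f(\left\Vert \nabla u\right\Vert )\left\Vert \nabla v\right\Vert ^{2}$ into the energy functional, exactly as the paper does.
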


\begin{proof}
Let $\left( u_{0},u_{1}\right) \in B$ and $S\left( t\right) \left(
u_{0},u_{1}\right) :=\left( u\left( t\right) ,u_{t}\left( t\right) \right) $%
. Then we have $u\in $ $C\left( [0,\infty );H^{4}\left(
\mathbb{R}
^{n}\right) \right) \cap $ $C^{1}\left( [0,\infty );H^{2}\left(
\mathbb{R}
^{n}\right) \right) \cap $ $C^{2}\left( [0,\infty );L^{2}\left(
\mathbb{R}
^{n}\right) \right) $. Defining
\begin{equation*}
v\left( t,x\right) :=\frac{u\left( t+\tau ,x\right) -u\left( t,x\right) }{%
\tau }\text{, }\tau >0,
\end{equation*}%
by (1.1), we get%
\begin{equation*}
v_{tt}(t,x)+\Delta ^{2}v(t,x)+\alpha (x)v_{t}(t,x)+\lambda v(t,x)-f\left(
\left\Vert \nabla u\left( t\right) \right\Vert _{L^{2}\left(
\mathbb{R}
^{n}\right) }\right) \Delta v\left( t,x\right)
\end{equation*}%
\begin{equation}
-\frac{f(\left\Vert \nabla u\left( t+\tau \right) \right\Vert _{L^{2}\left(
\mathbb{R}
^{n}\right) })-f\left( \left\Vert \nabla u\left( t\right) \right\Vert
_{L^{2}\left(
\mathbb{R}
^{n}\right) }\right) }{\tau }\Delta u(t+\tau ,x)=0\text{, \ \ }(t,x)\in
(0,\infty )\times
\mathbb{R}
^{n}\text{.}  \tag{3.1}
\end{equation}%
Multiplying (3.1) by $v_{t}$ and integrating over $%
\mathbb{R}
^{n}$, we find%
\begin{equation*}
\frac{d}{dt}E(v(t))+\int\limits_{%
\mathbb{R}
^{n}}\alpha \left( x\right) \left\vert v_{t}\left( t,x\right) \right\vert
^{2}dx+\frac{1}{2}f\left( \left\Vert \nabla u\left( t\right) \right\Vert
_{L^{2}\left(
\mathbb{R}
^{n}\right) }\right) \frac{d}{dt}\left( \left\Vert \nabla v\left( t\right)
\right\Vert _{L^{2}\left(
\mathbb{R}
^{n}\right) }^{2}\right)
\end{equation*}%
\begin{equation}
-\frac{f(\left\Vert \nabla u\left( t+\tau \right) \right\Vert _{L^{2}\left(
\mathbb{R}
^{n}\right) })-f\left( \left\Vert \nabla u\left( t\right) \right\Vert
_{L^{2}\left(
\mathbb{R}
^{n}\right) }\right) }{\tau }\int\limits_{%
\mathbb{R}
^{n}}\Delta u(t+\tau ,x)v_{t}\left( t,x\right) dx=0\text{.}  \tag{3.2}
\end{equation}%
Since
\begin{equation*}
\left\vert \frac{d}{dt}f\left( \left\Vert \nabla u\left( t\right)
\right\Vert _{L^{2}\left(
\mathbb{R}
^{n}\right) }\right) \right\vert =\left\vert \frac{f^{\prime }\left(
\left\Vert \nabla u\left( t\right) \right\Vert _{L^{2}\left(
\mathbb{R}
^{n}\right) }\right) }{\left\Vert \nabla u\left( t\right) \right\Vert
_{L^{2}\left(
\mathbb{R}
^{n}\right) }}\left\langle \nabla u\left( t\right) ,\nabla u_{t}\left(
t\right) \right\rangle _{L^{2}\left(
\mathbb{R}
^{n}\right) }\right\vert
\end{equation*}%
\begin{equation*}
\leq \left\vert f^{\prime }\left( \left\Vert \nabla u\left( t\right)
\right\Vert _{L^{2}\left(
\mathbb{R}
^{n}\right) }\right) \right\vert \left\Vert \nabla u_{t}\left( t\right)
\right\Vert _{L^{2}\left(
\mathbb{R}
^{n}\right) },\text{ a.e. in }\left( 0,\infty \right) \text{,}
\end{equation*}%
considering (1.3) and (1.6) in (3.2), we obtain%
\begin{equation*}
\frac{d}{dt}\left( E(v(t))+\frac{1}{2}f\left( \left\Vert \nabla u\left(
t\right) \right\Vert _{L^{2}\left(
\mathbb{R}
^{n}\right) }\right) \left\Vert \nabla v\left( t\right) \right\Vert
_{L^{2}\left(
\mathbb{R}
^{n}\right) }^{2}\right) +\alpha _{0}\left\Vert v_{t}\left( t\right)
\right\Vert _{L^{2}\left(
\mathbb{R}
^{n}\right) }^{2}
\end{equation*}%
\begin{equation*}
\leq c_{1}\left( \left\Vert \nabla u_{t}\left( t\right) \right\Vert
_{L^{2}\left(
\mathbb{R}
^{n}\right) }\left\Vert \nabla v\left( t\right) \right\Vert _{L^{2}\left(
\mathbb{R}
^{n}\right) }^{2}+\left\Vert \nabla v\left( t\right) \right\Vert
_{L^{2}\left(
\mathbb{R}
^{n}\right) }\int\limits_{%
\mathbb{R}
^{n}}\Delta u(t+\tau ,x)v_{t}\left( t,x\right) dx\right)
\end{equation*}%
\begin{equation*}
\leq c_{2}\left( \left\Vert \nabla u_{t}\left( t\right) \right\Vert
_{L^{2}\left(
\mathbb{R}
^{n}\right) }\left\Vert v\left( t\right) \right\Vert _{H^{2}\left(
\mathbb{R}
^{n}\right) }\left\Vert v\left( t\right) \right\Vert _{L^{2}\left(
\mathbb{R}
^{n}\right) }+\left\Vert v\left( t\right) \right\Vert _{H^{2}\left(
\mathbb{R}
^{n}\right) }^{\frac{1}{2}}\left\Vert v\left( t\right) \right\Vert
_{L^{2}\left(
\mathbb{R}
^{n}\right) }^{\frac{1}{2}}\left\Vert v_{t}\left( t\right) \right\Vert
_{L^{2}\left(
\mathbb{R}
^{n}\right) }\right) \text{.}
\end{equation*}%
Since, by (1.6),
\begin{equation*}
\left\Vert v\left( t\right) \right\Vert _{L^{2}\left(
\mathbb{R}
^{n}\right) }=\left\Vert \frac{u\left( t+\tau ,x\right) -u\left( t,x\right)
}{\tau }\right\Vert _{L^{2}\left(
\mathbb{R}
^{n}\right) }\leq \sup_{0\leq t<\infty }\left\Vert u_{t}\left( t\right)
\right\Vert _{L^{2}\left(
\mathbb{R}
^{n}\right) }<\widehat{C},
\end{equation*}%
by the previous inequality, we get%
\begin{equation*}
\frac{d}{dt}\left( E(v(t))+\frac{1}{2}f\left( \left\Vert \nabla u\left(
t\right) \right\Vert _{L^{2}\left(
\mathbb{R}
^{n}\right) }\right) \left\Vert \nabla v\left( t\right) \right\Vert
_{L^{2}\left(
\mathbb{R}
^{n}\right) }^{2}\right) +\alpha _{0}\left\Vert v_{t}\left( t\right)
\right\Vert _{L^{2}\left(
\mathbb{R}
^{n}\right) }^{2}
\end{equation*}%
\begin{equation}
\leq c_{3}\left( \left\Vert \nabla u_{t}\left( t\right) \right\Vert
_{L^{2}\left(
\mathbb{R}
^{n}\right) }\left\Vert v\left( t\right) \right\Vert _{H^{2}\left(
\mathbb{R}
^{n}\right) }+\left\Vert v\left( t\right) \right\Vert _{H^{2}\left(
\mathbb{R}
^{n}\right) }^{\frac{1}{2}}\left\Vert v_{t}\left( t\right) \right\Vert
_{L^{2}\left(
\mathbb{R}
^{n}\right) }\right) \text{.}  \tag{3.3}
\end{equation}%
Multiplying (3.1) by $\varepsilon v$ and integrating over $%
\mathbb{R}
^{n}$, we find%
\begin{equation*}
\varepsilon \frac{d}{dt}\left( \left\langle \nabla v\left( t\right) ,\nabla
v_{t}\left( t\right) \right\rangle _{L^{2}\left(
\mathbb{R}
^{n}\right) }+\frac{1}{2}\int\limits_{%
\mathbb{R}
^{n}}\alpha \left( x\right) v\left( t,x\right) ^{2}dx\right) +\varepsilon
\left\Vert \Delta v\left( t\right) \right\Vert _{L^{2}\left(
\mathbb{R}
^{n}\right) }^{2}
\end{equation*}%
\begin{equation}
+\varepsilon \lambda \left\Vert v\left( t\right) \right\Vert _{L^{2}\left(
\mathbb{R}
^{n}\right) }^{2}\leq \varepsilon \left\Vert v_{t}\left( t\right)
\right\Vert _{L^{2}\left(
\mathbb{R}
^{n}\right) }^{2}+\varepsilon c_{4}\left\Vert v\left( t\right) \right\Vert
_{H^{1}\left(
\mathbb{R}
^{n}\right) }.  \tag{3.4}
\end{equation}%
Considering (3.3) and (3.4), for sufficiently small $\varepsilon >0$ and
applying Young inequality, we obtain \
\begin{equation}
\frac{d}{dt}\Psi \left( t\right) +c_{5}E\left( v\left( t\right) \right) \leq
c_{6}+c_{6}\left\Vert \nabla u_{t}\left( t\right) \right\Vert _{L^{2}\left(
\mathbb{R}
^{n}\right) }^{2},  \tag{3.5}
\end{equation}%
where $c_{5}>0$ and
\begin{equation*}
\Psi \left( t\right) :=E\left( v\left( t\right) \right) +\frac{1}{2}f\left(
\left\Vert \nabla u\left( t\right) \right\Vert \right) \left\Vert \nabla
v\left( t\right) \right\Vert _{L^{2}\left(
\mathbb{R}
^{n}\right) }^{2}+\varepsilon \left\langle \nabla v\left( t\right) ,\nabla
v_{t}\left( t\right) \right\rangle _{L^{2}\left(
\mathbb{R}
^{n}\right) }+\frac{\varepsilon }{2}\int\limits_{%
\mathbb{R}
^{n}}\alpha \left( x\right) \left\vert v\left( t,x\right) \right\vert ^{2}dx.
\end{equation*}%
Since $\varepsilon >0$ is sufficiently small, there exist constants $c>0,$ $%
\widetilde{c}>0$ such that%
\begin{equation}
cE\left( v\left( t\right) \right) \leq \Psi \left( t\right) \leq \widetilde{c%
}E\left( v\left( t\right) \right) \text{.}  \tag{3.6}
\end{equation}%
Then, by (3.5) and (3.6), we have%
\begin{equation*}
\frac{d}{dt}\Psi \left( t\right) +c_{7}\Psi \left( t\right) \leq
c_{6}+c_{6}\left\Vert \nabla u_{t}\left( t\right) \right\Vert _{L^{2}\left(
\mathbb{R}
^{n}\right) }^{2}
\end{equation*}%
which yields%
\begin{equation*}
\Psi \left( t\right) \leq c_{8}+e^{-c_{7}t}c_{8}\int_{0}^{t}\left\Vert
\nabla u_{t}\left( s\right) \right\Vert _{L^{2}\left(
\mathbb{R}
^{n}\right) }^{2}e^{c_{7}s}ds
\end{equation*}%
\begin{equation*}
\leq c_{8}+c_{8}\sup_{t\in \left[ 0,T\right] }\left( \left\Vert u_{t}\left(
t\right) \right\Vert _{L^{2}\left(
\mathbb{R}
^{n}\right) }\left\Vert u_{t}\left( t\right) \right\Vert _{H^{2}\left(
\mathbb{R}
^{n}\right) }\right) ,
\end{equation*}%
for every $T\geq 0$. Taking into account (1.6) and (3.6) in the last
inequality, we find%
\begin{equation*}
E\left( v\left( t\right) \right) \leq c_{9}+c_{9}\sup_{t\in \left[ 0,T\right]
}\left\Vert u_{t}\left( t\right) \right\Vert _{H^{2}\left(
\mathbb{R}
^{n}\right) },\text{ \ }\forall t\in \lbrack 0,T],
\end{equation*}%
for every $T\geq 0$. Passing to limit as $\tau \rightarrow 0$ in the above
inequality, from the definition of $v$, we obtain%
\begin{equation*}
E\left( u_{t}\left( t\right) \right) \leq c_{9}+c_{9}\sup_{t\in \left[ 0,T%
\right] }\left\Vert u_{t}\left( t\right) \right\Vert _{H^{2}\left(
\mathbb{R}
^{n}\right) },\text{ \ \ }\forall t\in \lbrack 0,T],
\end{equation*}%
for every $T\geq 0$. Thus, after taking supremum on $\left[ 0,T\right] $ and
applying Young inequality, we have%
\begin{equation*}
E\left( u_{t}\left( t\right) \right) \leq c_{10},\text{ \ \ }\forall t\geq 0.
\end{equation*}%
Taking into account this estimate in (1.1), we find that%
\begin{equation*}
\left\Vert u\left( t\right) \right\Vert _{H^{4}\left(
\mathbb{R}
^{n}\right) }\leq c_{11}
\end{equation*}%
which, together with previous inequality, yields%
\begin{equation*}
\left\Vert \left( u\left( t\right) ,u_{t}\left( t\right) \right) \right\Vert
_{H^{4}\left(
\mathbb{R}
^{n}\right) \times H^{2}\left(
\mathbb{R}
^{n}\right) }\leq C,
\end{equation*}%
for some constant $C>0$.
\end{proof}

Now we can show the regularity of the attractor.

\begin{theorem}
Under the assumptions of Theorem 1.1, the global attractor $\mathcal{A}$ for
the problem (1.1)-(1.2) is bounded in $H^{4}\left(
\mathbb{R}
^{n}\right) \times H^{2}\left(
\mathbb{R}
^{n}\right) $.
\end{theorem}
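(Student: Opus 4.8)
The plan is to combine the invariance of the global attractor with the a priori bound from Lemma 3.1. Recall that by Theorem 2.1 the attractor satisfies $\mathcal{A}=\mathcal{M}^{u}(\mathcal{N})$, so every point of $\mathcal{A}$ lies on a complete bounded trajectory emanating from the set of stationary points $\mathcal{N}$. Thus the first step is to analyze the regularity of stationary solutions: if $(u,0)\in\mathcal{N}$, then $u$ solves the elliptic equation $\Delta^{2}u+\lambda u - f(\|\nabla u\|_{L^{2}(\mathbb{R}^{n})})\Delta u = h$ in $\mathbb{R}^{n}$. Since $u\in H^{2}(\mathbb{R}^{n})$, the quantity $\sigma:=f(\|\nabla u\|_{L^{2}(\mathbb{R}^{n})})\ge 0$ is a fixed nonnegative constant, so $u$ satisfies the linear equation $\Delta^{2}u - \sigma\Delta u + \lambda u = h$ with $h\in L^{2}(\mathbb{R}^{n})$; elliptic regularity (applying $(\Delta^{2}-\sigma\Delta+\lambda)^{-1}$, which is bounded from $L^{2}$ to $H^{4}$ uniformly for $\sigma\ge 0$ since $\lambda>0$) gives $u\in H^{4}(\mathbb{R}^{n})$ with a bound depending only on $\|h\|_{L^{2}(\mathbb{R}^{n})}$ and on the (already controlled) bound for $\|\nabla u\|_{L^{2}(\mathbb{R}^{n})}$ coming from $\mathcal{A}$ being bounded in $H^{2}(\mathbb{R}^{n})\times L^{2}(\mathbb{R}^{n})$. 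Hence $\mathcal{N}$ is a bounded subset of $H^{4}(\mathbb{R}^{n})\times H^{2}(\mathbb{R}^{n})$.

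Next I would transfer this regularity from $\mathcal{N}$ to all of $\mathcal{A}$ using Lemma 3.1. Let $B$ be a bounded set in $H^{4}(\mathbb{R}^{n})\times H^{2}(\mathbb{R}^{n})$ containing $\mathcal{N}$, and let $C$ be the constant furnished by Lemma 3.1 for this $B$, so that $\|S(t)\varphi\|_{H^{4}(\mathbb{R}^{n})\times H^{2}(\mathbb{R}^{n})}\le C$ for all $\varphi\in\mathcal{N}$ and all $t\ge 0$. Now take any $\xi\in\mathcal{A}=\mathcal{M}^{u}(\mathcal{N})$. By the definition of the unstable manifold, there is a complete trajectory $\gamma:\mathbb{R}\to\mathcal{A}$ with $\gamma(0)=\xi$ and $\mathrm{dist}_{H^{2}\times L^{2}}(\gamma(-s),\mathcal{N})\to 0$ as $s\to\infty$. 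Fix $\varepsilon>0$; for $s$ large there is $\varphi_{s}\in\mathcal{N}$ with $\|\gamma(-s)-\varphi_{s}\|_{H^{2}\times L^{2}}<\varepsilon$. Writing $\xi=S(s)\gamma(-s)$ and using the continuity of $S(s)$ on $H^{2}(\mathbb{R}^{n})\times L^{2}(\mathbb{R}^{n})$ together with $\|S(s)\varphi_{s}\|_{H^{4}\times H^{2}}\le C$, one concludes that $\xi$ is an $H^{2}\times L^{2}$-limit of a bounded sequence in $H^{4}(\mathbb{R}^{n})\times H^{2}(\mathbb{R}^{n})$; since bounded sets of $H^{4}\times H^{2}$ are weakly (sequentially) closed in $H^{2}\times L^{2}$ and norm-lower-semicontinuous, it follows that $\xi\in H^{4}(\mathbb{R}^{n})\times H^{2}(\mathbb{R}^{n})$ with $\|\xi\|_{H^{4}\times H^{2}}\le C$. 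As $\xi\in\mathcal{A}$ was arbitrary, $\mathcal{A}$ is bounded in $H^{4}(\mathbb{R}^{n})\times H^{2}(\mathbb{R}^{n})$, which is the claim.

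The main obstacle I anticipate is making the limiting argument in the second step fully rigorous: one needs that a bounded sequence in $H^{4}\times H^{2}$ converging in $H^{2}\times L^{2}$ has its $H^{4}\times H^{2}$-limit in the same ball, which is standard weak-compactness plus weak lower semicontinuity of the norm, but it must be phrased carefully since there is no compact embedding on $\mathbb{R}^{n}$. An alternative, perhaps cleaner route that avoids this subtlety is to use invariance directly: since $\mathcal{A}$ is invariant and attracts $\mathcal{N}$ (indeed $\mathcal{A}=\overline{\bigcup_{t\ge 0}S(t)\mathcal{N}}^{\,H^{2}\times L^{2}}$ when $\mathcal{A}=\mathcal{M}^{u}(\mathcal{N})$ and $\mathcal N$ is the full set of equilibria under the Lyapunov structure), Lemma 3.1 shows $\bigcup_{t\ge 0}S(t)\mathcal{N}$ is bounded in $H^{4}\times H^{2}$, and then the closure argument above upgrades this to $\mathcal{A}$ itself. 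Either way, the only real content beyond Lemma 3.1 is the elliptic bound on $\mathcal{N}$ and the weak-closedness of balls, both routine.
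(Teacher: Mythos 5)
Your first step (elliptic regularity for stationary points, giving $\mathcal{N}$ bounded in $H^{4}(\mathbb{R}^{n})\times H^{2}(\mathbb{R}^{n})$) is correct, but the transfer step from $\mathcal{N}$ to $\mathcal{A}$ is where the whole difficulty of the theorem lies, and as written it fails. Note first that every $\varphi_{s}\in\mathcal{N}$ is a fixed point, so $S(s)\varphi_{s}=\varphi_{s}$; your approximating sequence is therefore a sequence in $\mathcal{N}$ itself, and if it really converged to $\xi$ in $H^{2}\times L^{2}$ you would conclude $\xi\in\mathcal{N}$ (since $\mathcal{N}$ is closed), i.e. $\mathcal{A}=\mathcal{N}$, which is absurd. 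The reason it does not converge is that continuity of $S(s)$ holds for each fixed $s$ only: you are sending $s\to\infty$ simultaneously with $\left\Vert \gamma(-s)-\varphi_{s}\right\Vert \to 0$, and the modulus of continuity of $S(s)$ on $H^{2}\times L^{2}$ degrades (generically exponentially in $s$, compare the factor $e^{(\widetilde{c}_{2}-\omega)t}$ in (4.7)), so $\left\Vert S(s)\gamma(-s)-S(s)\varphi_{s}\right\Vert$ need not be small. For the same reason your alternative identity $\mathcal{A}=\overline{\cup_{t\geq 0}S(t)\mathcal{N}}$ is false: since $\mathcal{N}$ consists of equilibria, $\cup_{t\geq 0}S(t)\mathcal{N}=\mathcal{N}$. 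The unstable manifold consists of points whose \emph{backward} orbits approach $\mathcal{N}$, not of forward images of $\mathcal{N}$.

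The missing idea is a uniform backward-in-time estimate along the complete trajectory itself. The paper fixes $\theta\in\mathcal{A}$, takes the invariant trajectory $U(t)=\left( u(t),u_{t}(t)\right)$ through $\theta$ and the difference quotient $v=\tau^{-1}\left( u(\cdot+\tau)-u(\cdot)\right)$, and derives $\frac{d}{dt}\Psi(t)+\widehat{c}_{4}\Psi(t)\leq\widehat{c}_{5}$ for $t\leq t_{0}$, where $t_{0}$ is chosen via (3.7)--(3.8) (using compactness of $\mathcal{N}$ and, when $h\neq 0$, the fact that $0\notin\mathcal{N}$) so that the coefficient $f^{\prime}\left( \left\Vert \nabla u\right\Vert\right) /\left\Vert \nabla u\right\Vert$ stays controlled. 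Integrating from $s$ to $t$ and letting $s\rightarrow-\infty$ --- where $\Psi(s)$ remains bounded for fixed $\tau$ because the entire trajectory lies in the bounded set $\mathcal{A}$ --- gives $\Psi(t)\leq\widehat{c}_{6}$ for all $t\leq t_{0}$; then $\tau\rightarrow 0$ yields $E\left( u_{t}(t)\right)\leq\widehat{c}_{7}$ and, via equation (1.1), $\left\Vert U(t)\right\Vert _{H^{4}\times H^{2}}\leq\widehat{c}_{9}$ for $t\leq t_{0}$. Only at that point is Lemma 3.1 applicable, with $B=\left\{ U(t):t\leq t_{0}\right\}$, to propagate the bound forward to $\theta=U(0)$. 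So the real work is producing \emph{some} $H^{4}\times H^{2}$-bounded piece of the backward orbit of $\theta$ to feed into Lemma 3.1; an approximation by equilibria cannot supply it, and the weak-closedness issue you flag is not the true obstruction.
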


\begin{proof}
Let $\theta \in \mathcal{A}$. By the invariance of $\mathcal{A}$, it follows
that (see [21, p. 159]) there exists an invariant trajectory $\gamma
=\left\{ U\left( t\right) =\left( u\left( t\right) ,u_{t}\left( t\right)
\right) :t\in
\mathbb{R}
\right\} \subset \mathcal{A}$ such that $U\left( 0\right) =\theta $. By an
invariant trajectory \ we mean a curve $\gamma =\left\{ U\left( t\right)
:t\in
\mathbb{R}
\right\} $ such that $S\left( t\right) U\left( \tau \right) =U\left( t+\tau
\right) $ for all $t\geq 0$ and $\tau \in
\mathbb{R}
$ (see [21, p. 157])

In the case when $h\equiv 0$ in equation (1.1), by (1.4), it follows that
the stationary point set $\mathcal{N}=\left\{ \left( 0,0\right) \right\} .$
By Theorem 1.1 and the definition of unstable manifold, we have
\begin{equation}
\text{\ \ }\lim_{t\rightarrow -\infty }\inf_{w\in \mathcal{N}}\left\Vert
U\left( t\right) -w\right\Vert _{H^{2}(%
\mathbb{R}
^{n})\times L^{2}(%
\mathbb{R}
^{n})}=0.  \tag{3.7}
\end{equation}%
Then, from the monotonicity of the Lyapunov function $\Phi \left( \cdot
\right) $, we have $\mathcal{A}=\left\{ \left( 0,0\right) \right\} $.

So, we will consider the case $h\neq 0$. In this case, it is clear that $%
\mathcal{N}$ does not contain $\left( 0,0\right) $. Since $\mathcal{N}$ is
compact (because it is a closed subset of $\mathcal{A}$), by (3.7), we
obtain that there exists $t_{0}\in \left( -\infty ,0\right) $ such that
\begin{equation}
\left\Vert U\left( t\right) \right\Vert _{H^{2}(%
\mathbb{R}
^{n})\times L^{2}(%
\mathbb{R}
^{n})}>c_{0},\text{ \ }\forall t\leq t_{0},  \tag{3.8}
\end{equation}%
for some $c_{0}>0$ which depends on $\mathcal{N}$ and is independent of $%
U\left( t\right) $. Now, again defining%
\begin{equation*}
v\left( t,x\right) :=\frac{u\left( t+\tau ,x\right) -u\left( t,x\right) }{%
\tau },
\end{equation*}%
we have equation (3.1). Multiplying (3.1) by $v_{t}$ and integrating over $%
\mathbb{R}
^{n}$, by using (3.8), we get%
\begin{equation*}
\frac{d}{dt}\left( E(v\left( t\right) )+\frac{1}{2}f\left( \left\Vert \nabla
u\left( t\right) \right\Vert \right) \left\Vert \nabla v\left( t\right)
\right\Vert _{L^{2}\left(
\mathbb{R}
^{n}\right) }^{2}\right) +\alpha _{0}\left\Vert v_{t}\left( t\right)
\right\Vert _{L^{2}\left(
\mathbb{R}
^{n}\right) }^{2}
\end{equation*}%
\begin{equation*}
\leq \frac{f^{\prime }\left( \left\Vert \nabla u\left( t\right) \right\Vert
\right) }{\left\Vert \nabla u\left( t\right) \right\Vert }\left\Vert \Delta
u\left( t\right) \right\Vert \left\Vert u_{t}\left( t\right) \right\Vert
\left\Vert \nabla v\left( t\right) \right\Vert _{L^{2}\left(
\mathbb{R}
^{n}\right) }^{2}
\end{equation*}%
\begin{equation*}
+\widehat{c}_{1}\left\Vert \nabla v\left( t\right) \right\Vert _{L^{2}\left(
\mathbb{R}
^{n}\right) }\int\limits_{%
\mathbb{R}
^{n}}\Delta u(t+\tau ,x)v_{t}\left( t,x\right) dx
\end{equation*}%
\begin{equation*}
\leq \widehat{c}_{2}\left\Vert \nabla v\left( t\right) \right\Vert
_{L^{2}\left(
\mathbb{R}
^{n}\right) }^{2}+\widehat{c}_{2}\left\Vert \nabla v\left( t\right)
\right\Vert _{L^{2}\left(
\mathbb{R}
^{n}\right) }\left\Vert v_{t}\left( t\right) \right\Vert _{L^{2}\left(
\mathbb{R}
^{n}\right) }
\end{equation*}%
\begin{equation}
\leq \widehat{c}_{3}\left\Vert v\left( t\right) \right\Vert _{H^{2}\left(
\mathbb{R}
^{n}\right) }+\widehat{c}_{3}\left\Vert v\left( t\right) \right\Vert
_{H^{2}\left(
\mathbb{R}
^{n}\right) }^{\frac{1}{2}}\left\Vert v_{t}\left( t\right) \right\Vert
_{L^{2}\left(
\mathbb{R}
^{n}\right) }.  \tag{3.9}
\end{equation}%
By using (3.4), (3.6) and (3.9), we find \
\begin{equation*}
\frac{d}{dt}\Psi \left( t\right) +\widehat{c}_{4}\Psi \left( t\right) \leq
\widehat{c}_{5}\text{, \ \ \ }\forall t\leq t_{0},
\end{equation*}%
which yields%
\begin{equation*}
\Psi \left( t\right) \leq \widehat{c}_{6}+e^{\widehat{c}_{4}\left(
s-t\right) }\Psi \left( s\right) ,\text{ \ \ \ \ }s\leq t\leq t_{0},
\end{equation*}%
where $\widehat{c}_{4}>0$. Then, passing to limit as $s\rightarrow -\infty $
and taking into account that $\cup _{t\in
\mathbb{R}
}U\left( t\right) \subset \mathcal{A}$, we have%
\begin{equation*}
\Psi \left( t\right) \leq \widehat{c}_{6},
\end{equation*}%
which, by (3.6), gives%
\begin{equation*}
E\left( v\left( t\right) \right) \leq \widehat{c}_{7}.
\end{equation*}%
Now, passing to limit as $\tau \rightarrow 0$ in the last inequality, we
obtain%
\begin{equation}
E\left( u_{t}\left( t\right) \right) \leq \widehat{c}_{7},\text{ \ \ \ }%
\forall t\leq t_{0}.  \tag{3.10}
\end{equation}%
Considering (3.10) in (1.1), we find%
\begin{equation*}
\left\Vert u\left( t\right) \right\Vert _{H^{4}\left(
\mathbb{R}
^{n}\right) }\leq \widehat{c}_{8},\text{ \ \ \ }\forall t\leq t_{0},
\end{equation*}%
which, together with (3.10), yields%
\begin{equation*}
\left\Vert \left( u\left( t\right) ,u_{t}\left( t\right) \right) \right\Vert
_{H^{4}\left(
\mathbb{R}
^{n}\right) \times H^{2}\left(
\mathbb{R}
^{n}\right) }\leq \widehat{c}_{9},\text{ \ \ \ }\forall t\leq t_{0}.
\end{equation*}%
Thus, applying Lemma 3.1 to the set $B=\left\{ \left( u\left( t\right)
,u_{t}\left( t\right) \right) :t\in \left( -\infty ,t_{0}\right] \right\} $,
we obtain%
\begin{equation*}
\left\Vert \theta \right\Vert _{H^{4}\left(
\mathbb{R}
^{n}\right) \times H^{2}\left(
\mathbb{R}
^{n}\right) }\leq C,
\end{equation*}%
where $C>0$ is a constant independent of $\theta $.
\end{proof}

\section{Finite dimensionality of the global attractor}

In this section, we will use the idea of the [12] to obtain the finite
dimensionality. Let us start with the following lemma.

\begin{lemma}
Assume that the conditions (1.3) and (1.4) hold and $u\in W^{1,\infty
}\left( 0,\infty ;H^{2}\left(
\mathbb{R}
^{n}\right) \right) $ such that%
\begin{equation}
\left\Vert u\right\Vert _{W^{1,\infty }\left( 0,\infty ;H^{2}\left(
\mathbb{R}
^{n}\right) \right) }+\int_{0}^{\infty }\left\Vert u_{t}(t)\right\Vert
_{L^{2}\left(
\mathbb{R}
^{n}\right) }^{2}dt<c\text{.}  \tag{4.1}
\end{equation}%
for some constant $c>0$. Also, let $\left\{ T\left( t,\tau \right) \right\}
_{t\geq \tau }$ be the process generated by the problem%
\begin{equation}
\left\{
\begin{array}{c}
v_{tt}+\Delta ^{2}v+\alpha (x)v_{t}+\lambda v-f\left( \left\Vert \nabla
u\left( t\right) \right\Vert _{L^{2}\left(
\mathbb{R}
^{n}\right) }\right) \Delta v=0\text{, \ \ }t\geq \tau \text{, } \\
v\left( \tau \right) =v_{0}\text{, \ }v_{t}\left( \tau \right) =v_{1},\text{
\ \ \ }\tau \geq 0\text{ \ \ }%
\end{array}%
\right.  \tag{4.2}
\end{equation}%
in $H^{2}\left(
\mathbb{R}
^{n}\right) \times L^{2}\left(
\mathbb{R}
^{n}\right) $. Then there exist $M=M\left( c\right) >1$ and $\omega =\omega
\left( c\right) >0$ such that%
\begin{equation*}
\left\Vert T\left( t,\tau \right) \right\Vert _{L\left( H^{2\left(
1+i\right) }\left(
\mathbb{R}
^{n}\right) \times H^{2i}\left(
\mathbb{R}
^{n}\right) \right) }\leq Me^{-\omega \left( t-\tau \right) }\text{, \ \ \ }%
\forall t\geq \tau
\end{equation*}%
where $i=0,1$ and $L\left( X\right) $ is the space of linear bounded
operators in $X.$
\end{lemma}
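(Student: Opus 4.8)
The plan is to run a multiplier/energy argument for the linear process, in which $g(t):=f(\left\Vert \nabla u(t)\right\Vert _{L^{2}(\mathbb{R}^{n})})$ is treated as a given coefficient which, by (1.4) and (4.1), is nonnegative, bounded ($0\le g(t)\le \max_{[0,c]}f$) and Lipschitz in $t$, while the strict positivity $\alpha(\cdot)\ge\alpha_{0}>0$ from (1.3) supplies the dissipation. (The process $\{T(t,\tau)\}$ being well-defined on $H^{2}(\mathbb{R}^{n})\times L^{2}(\mathbb{R}^{n})$ and on $H^{4}(\mathbb{R}^{n})\times H^{2}(\mathbb{R}^{n})$ is standard for this linear equation.) First I treat $i=0$. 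For a solution $v$ of (4.2) set
\[
\mathcal{E}_{0}(t):=\tfrac{1}{2}\big(\left\Vert v_{t}(t)\right\Vert ^{2}+\left\Vert \Delta v(t)\right\Vert ^{2}+\lambda \left\Vert v(t)\right\Vert ^{2}+g(t)\left\Vert \nabla v(t)\right\Vert ^{2}\big),
\]
norms in $L^{2}(\mathbb{R}^{n})$; since $g\ge 0$ is bounded, $\mathcal{E}_{0}(t)$ is equivalent, uniformly in $t$, to $\left\Vert (v(t),v_{t}(t))\right\Vert _{H^{2}(\mathbb{R}^{n})\times L^{2}(\mathbb{R}^{n})}^{2}$. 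Multiplying (4.2) by $v_{t}$ gives $\frac{d}{dt}\mathcal{E}_{0}+\int_{\mathbb{R}^{n}}\alpha |v_{t}|^{2}dx=\tfrac{1}{2}g^{\prime }(t)\left\Vert \nabla v\right\Vert ^{2}$, and multiplying by $\varepsilon v$ produces the usual cross-terms; hence for $\Psi :=\mathcal{E}_{0}+\varepsilon \langle v_{t},v\rangle +\tfrac{\varepsilon }{2}\int_{\mathbb{R}^{n}}\alpha |v|^{2}dx$, which is equivalent to $\mathcal{E}_{0}$ for $\varepsilon$ small (here $\alpha\in L^{\infty}$ is used), one gets, using $\int_{\mathbb{R}^{n}}\alpha |v_{t}|^{2}dx\ge \alpha _{0}\left\Vert v_{t}\right\Vert ^{2}$, the interpolation $\left\Vert \nabla v\right\Vert ^{2}\le \left\Vert v\right\Vert \left\Vert \Delta v\right\Vert$ (to absorb part of the $g^{\prime }$-term into $-\varepsilon \left\Vert \Delta v\right\Vert ^{2}$) and $\left\Vert v\right\Vert ^{2}\le \tfrac{2}{\lambda }\mathcal{E}_{0}$, the inequality
\[
\frac{d}{dt}\Psi (t)\le \big(-c_{1}+c_{2}\,|g^{\prime }(t)|^{2}\big)\Psi (t),
\]
with $c_{1},c_{2}>0$ depending only on $\alpha _{0},\lambda ,\varepsilon$ and $\max_{[0,c]}f$.

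The decisive step is to show $\int_{\tau }^{t}|g^{\prime }(s)|^{2}ds=O\big((t-\tau )^{1/2}\big)$ uniformly in $\tau$. By (4.1), $u_{t}(s)\in H^{2}(\mathbb{R}^{n})$ with $\left\Vert u_{t}(s)\right\Vert _{H^{2}}<c$, so $\left\Vert \nabla u_{t}(s)\right\Vert ^{2}\le \left\Vert u_{t}(s)\right\Vert _{L^{2}}\left\Vert \Delta u_{t}(s)\right\Vert _{L^{2}}\le c\left\Vert u_{t}(s)\right\Vert _{L^{2}(\mathbb{R}^{n})}$; and since $f\in C^{1}$ and $\left\Vert \nabla u(s)\right\Vert _{L^{2}}\le c$, a.e. $|g^{\prime }(s)|=|f^{\prime }(\left\Vert \nabla u(s)\right\Vert )|\,\big|\tfrac{d}{ds}\left\Vert \nabla u(s)\right\Vert \big|\le C_{f}\left\Vert \nabla u_{t}(s)\right\Vert _{L^{2}}$ with $C_{f}=\max_{[0,c]}|f^{\prime }|$ (the only point to check being that on the set where $\nabla u(s)=0$ one has $g^{\prime }(s)=0$ a.e., the factor $\left\Vert \nabla u(s)\right\Vert$ in the denominator of $\frac{d}{ds}\left\Vert \nabla u(s)\right\Vert$ being cancelled). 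Hence $|g^{\prime }(s)|^{2}\le C_{f}^{2}c\left\Vert u_{t}(s)\right\Vert _{L^{2}(\mathbb{R}^{n})}$, and by Cauchy--Schwarz and (4.1),
\[
\int_{\tau }^{t}|g^{\prime }(s)|^{2}ds\le C_{f}^{2}c\,(t-\tau )^{1/2}\Big(\int_{0}^{\infty }\left\Vert u_{t}(s)\right\Vert _{L^{2}(\mathbb{R}^{n})}^{2}ds\Big)^{1/2}\le C(c)\,(t-\tau )^{1/2}.
\]
Inserting this into the Gronwall form of the previous differential inequality produces the exponent $-c_{1}(t-\tau )+C(c)c_{2}(t-\tau )^{1/2}\le -\tfrac{c_{1}}{2}(t-\tau )+C^{\prime }(c)$, so $\Psi (t)\le e^{C^{\prime }(c)}\Psi (\tau )e^{-(c_{1}/2)(t-\tau )}$; passing back to $\mathcal{E}_{0}$ and taking square roots gives the estimate for $i=0$ with $M=M(c)$, $\omega =\omega (c)$. (The energy identities above are justified first for strong data and then extended by density, as in the rest of the paper.)

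For $i=1$ I cannot differentiate (4.2) in space, because $\alpha$ is only in $L^{\infty }(\mathbb{R}^{n})$, so I differentiate in time: $z:=v_{t}$ solves the same equation with forcing $g^{\prime }(t)\Delta v$ on the right. Running the same multiplier scheme for $z$, the only new term in the energy identity is $g^{\prime }(t)\langle \Delta v,z_{t}\rangle$, which I bound by $\tfrac{\alpha _{0}}{2}\left\Vert z_{t}\right\Vert ^{2}+\tfrac{1}{2\alpha _{0}}|g^{\prime }(t)|^{2}\left\Vert \Delta v(t)\right\Vert ^{2}$; the first part is absorbed by the damping, and since $|g^{\prime }|$ is bounded and $\left\Vert \Delta v(t)\right\Vert ^{2}\le 2\mathcal{E}_{0}(t)$ decays exponentially by the $i=0$ step, the second is a forcing bounded by $C\,e^{-\omega (t-\tau )}\left\Vert (v_{0},v_{1})\right\Vert _{H^{2}\times L^{2}}^{2}$. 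The Gronwall argument for the $z$-energy then gives $\mathcal{E}_{0}^{(z)}(t)\le M\,e^{-\omega (t-\tau )}\big(\mathcal{E}_{0}^{(z)}(\tau )+\left\Vert (v_{0},v_{1})\right\Vert _{H^{2}\times L^{2}}^{2}\big)$ (shrinking $\omega$ slightly if two rates coincide). Finally $\mathcal{E}_{0}^{(z)}(\tau )\le C\left\Vert (v_{0},v_{1})\right\Vert _{H^{4}\times H^{2}}^{2}$ via $v_{tt}(\tau )=-\Delta ^{2}v_{0}-\alpha v_{1}-\lambda v_{0}+g(\tau )\Delta v_{0}$, and $\left\Vert v(t)\right\Vert _{H^{4}}^{2}\le C\big(\mathcal{E}_{0}(t)+\mathcal{E}_{0}^{(z)}(t)\big)$ is recovered from $\Delta ^{2}v(t)=-v_{tt}(t)-\alpha v_{t}(t)-\lambda v(t)+g(t)\Delta v(t)$ together with $\left\Vert v\right\Vert _{H^{4}}\simeq \left\Vert v\right\Vert _{L^{2}}+\left\Vert \Delta ^{2}v\right\Vert _{L^{2}}$; combining, $\left\Vert (v(t),v_{t}(t))\right\Vert _{H^{4}(\mathbb{R}^{n})\times H^{2}(\mathbb{R}^{n})}^{2}\le C\,e^{-\omega (t-\tau )}\left\Vert (v_{0},v_{1})\right\Vert _{H^{4}\times H^{2}}^{2}$, which is the case $i=1$. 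The main obstacle throughout is the time-dependence of $g$, concretely the term $\tfrac{1}{2}g^{\prime }(t)\left\Vert \nabla v\right\Vert ^{2}$: it is exactly the integrated smallness $\int_{0}^{\infty }\left\Vert u_{t}\right\Vert _{L^{2}(\mathbb{R}^{n})}^{2}ds<\infty$ in (4.1) that makes $\int_{\tau }^{t}|g^{\prime }|^{2}ds$ negligible against $t-\tau$ and hence keeps the decay rate $\omega$ uniform in $\tau$; without it one could only control $\left\Vert g^{\prime }\right\Vert _{L^{\infty }}$, which is insufficient unless the coefficients are suitably small.
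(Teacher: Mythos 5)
Your proposal is correct and follows essentially the same route as the paper: the multiplier $v_{t}+\varepsilon v$ for $i=0$, with the coefficient-derivative term $\tfrac12 g'(t)\Vert\nabla v\Vert^{2}$ controlled through the interpolation $\Vert\nabla u_{t}\Vert^{2}\le\Vert u_{t}\Vert_{L^{2}}\Vert\Delta u_{t}\Vert_{L^{2}}$ and the integrability $\int_{0}^{\infty}\Vert u_{t}\Vert_{L^{2}}^{2}dt<c$, so that the Gronwall exponent grows only sublinearly (you get $(t-\tau)^{1/2}$ after Young's inequality where the paper keeps $\Vert u_{t}\Vert^{1/2}$ and gets $(t-\tau)^{3/4}$ — same idea); and for $i=1$ differentiating in time and treating $g'(t)\Delta v$ as an exponentially decaying forcing, which is just the differential form of the paper's variation-of-parameters argument, followed by elliptic recovery of the $H^{4}$ norm from the equation. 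No gaps of substance.
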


\begin{proof}
By using the multiplier $(v_{t}+\varepsilon v)$ as in Lemma 3.1, for
sufficiently small $\varepsilon >0$ and applying Young inequality, we get \
\begin{equation*}
\frac{d}{dt}\Psi \left( t\right) +\gamma E\left( v\left( t\right) \right)
\leq c_{1}\left\Vert \nabla u_{t}\left( t\right) \right\Vert _{L^{2}\left(
\mathbb{R}
^{n}\right) }\left\Vert \nabla v\left( t\right) \right\Vert _{L^{2}\left(
\mathbb{R}
^{n}\right) }^{2},
\end{equation*}%
for some $\gamma >0$. Then, by using interpolation, (3.6) and (4.1), we find%
\begin{equation*}
\frac{d}{dt}\Psi \left( t\right) +\gamma \Psi \left( v\left( t\right)
\right) \leq c_{2}\left\Vert u_{t}\left( t\right) \right\Vert _{L^{2}\left(
\mathbb{R}
^{n}\right) }^{\frac{1}{2}}\Psi \left( t\right) .
\end{equation*}%
Hence, by Gronwall inequality and (3.6), we have,
\begin{equation}
E\left( t\right) \leq c_{3}E\left( \tau \right) e^{c_{2}\int\limits_{\tau
}^{t}\left\Vert u_{t}\left( \sigma \right) \right\Vert _{L^{2}\left(
\mathbb{R}
^{n}\right) }^{\frac{1}{2}}d\sigma -\gamma \left( t-\tau \right) }\text{.}
\tag{4.3}
\end{equation}%
Since, by Holder inequality and (4.1),%
\begin{equation*}
\int\limits_{\tau }^{t}\left\Vert u_{t}\left( \sigma \right) \right\Vert
_{L^{2}\left(
\mathbb{R}
^{n}\right) }^{\frac{1}{2}}d\sigma \leq c_{4}\left( t-\tau \right) ^{\frac{3%
}{4}},
\end{equation*}%
from (4.3) it follows that
\begin{equation}
\left\Vert T\left( t,\tau \right) \right\Vert _{L\left( H^{2}\left(
\mathbb{R}
^{n}\right) \times L^{2}\left(
\mathbb{R}
^{n}\right) \right) }\leq M_{1}e^{-\omega \left( t-\tau \right) },\text{ \ \
\ }\forall t\geq \tau ,  \tag{4.4}
\end{equation}%
for some $M_{1}>1$ and $\omega >0.$

Now, let us define $w:=v_{t}$. Then $w$ is the solution of the following
equation%
\begin{equation*}
w_{tt}(t,x)+\Delta ^{2}w(t,x)+\alpha (x)w_{t}(t,x)+\lambda w(t,x)-f\left(
\left\Vert \nabla u\left( t\right) \right\Vert _{L^{2}\left(
\mathbb{R}
^{n}\right) }\right) \Delta w\left( t,x\right)
\end{equation*}%
\begin{equation*}
-\frac{f^{\prime }\left( \left\Vert \nabla u\left( t\right) \right\Vert
_{L^{2}\left(
\mathbb{R}
^{n}\right) }\right) }{\left\Vert \nabla u\left( t\right) \right\Vert
_{L^{2}\left(
\mathbb{R}
^{n}\right) }}\left\langle \nabla u_{t}\left( t\right) ,\nabla u\left(
t\right) \right\rangle _{L^{2}\left(
\mathbb{R}
^{n}\right) }\Delta v\left( t,x\right) =0\text{, \ \ }t\geq \tau \text{, }%
x\in
\mathbb{R}
^{n},
\end{equation*}%
and by the variation of parameters formula, we have%
\begin{equation*}
W\left( t\right) =T\left( t,\tau \right) W\left( \tau \right)
+\int\limits_{\tau }^{t}T\left( t,s\right) G\left( s\right) ds,
\end{equation*}%
where $W\left( t\right) :=\left( w\left( t\right) ,w_{t}\left( t\right)
\right) $ and $G\left( t\right) :=\left( 0,\frac{f^{\prime }\left(
\left\Vert \nabla u\left( t\right) \right\Vert _{L^{2}\left(
\mathbb{R}
^{n}\right) }\right) }{\left\Vert \nabla u\left( t\right) \right\Vert
_{L^{2}\left(
\mathbb{R}
^{n}\right) }}\left\langle \nabla u_{t}\left( t\right) ,\nabla u\left(
t\right) \right\rangle _{L^{2}\left(
\mathbb{R}
^{n}\right) }\Delta v\left( t\right) \right) $. Therefore, by using (4.4),
we find%
\begin{equation*}
\left\Vert W\left( t\right) \right\Vert _{H^{2}\left(
\mathbb{R}
^{n}\right) \times L^{2}\left(
\mathbb{R}
^{n}\right) }\leq \left\Vert T\left( t,\tau \right) W\left( \tau \right)
\right\Vert _{H^{2}\left(
\mathbb{R}
^{n}\right) \times L^{2}\left(
\mathbb{R}
^{n}\right) }+\int\limits_{\tau }^{t}\left\Vert T\left( t,s\right) G\left(
s\right) \right\Vert _{H^{2}\left(
\mathbb{R}
^{n}\right) \times L^{2}\left(
\mathbb{R}
^{n}\right) }ds
\end{equation*}%
\begin{equation*}
\leq M_{1}e^{-\omega \left( t-\tau \right) }\left\Vert W\left( \tau \right)
\right\Vert _{H^{2}\left(
\mathbb{R}
^{n}\right) \times L^{2}\left(
\mathbb{R}
^{n}\right) }+c_{5}\int\limits_{\tau }^{t}e^{-\omega \left( t-s\right)
}\left\Vert G\left( s\right) \right\Vert _{H^{2}\left(
\mathbb{R}
^{n}\right) \times L^{2}\left(
\mathbb{R}
^{n}\right) }ds
\end{equation*}%
\begin{equation*}
\leq M_{1}e^{-\omega \left( t-\tau \right) }\left\Vert W\left( \tau \right)
\right\Vert _{H^{2}\left(
\mathbb{R}
^{n}\right) \times L^{2}\left(
\mathbb{R}
^{n}\right) }+c_{6}\int\limits_{\tau }^{t}e^{-\omega \left( t-s\right)
}\left\Vert v\left( s\right) \right\Vert _{H^{2}\left(
\mathbb{R}
^{n}\right) }ds
\end{equation*}%
\begin{equation*}
\leq M_{1}e^{-\omega \left( t-\tau \right) }\left\Vert W\left( \tau \right)
\right\Vert _{H^{2}\left(
\mathbb{R}
^{n}\right) \times L^{2}\left(
\mathbb{R}
^{n}\right) }+c_{7}\int\limits_{\tau }^{t}e^{-\omega \left( t-s\right)
}e^{-\omega \left( s-\tau \right) }\left\Vert (v\left( \tau \right)
,v_{t}\left( \tau \right) )\right\Vert _{H^{2}\left(
\mathbb{R}
^{n}\right) \times L^{2}\left(
\mathbb{R}
^{n}\right) }ds
\end{equation*}%
\begin{equation*}
\leq c_{8}e^{-\omega \left( t-\tau \right) }\left( \left\Vert W\left( \tau
\right) \right\Vert _{H^{2}\left(
\mathbb{R}
^{n}\right) \times L^{2}\left(
\mathbb{R}
^{n}\right) }+\left\Vert (v\left( \tau \right) ,v_{t}\left( \tau \right)
)\right\Vert _{H^{2}\left(
\mathbb{R}
^{n}\right) \times L^{2}\left(
\mathbb{R}
^{n}\right) }\right) ,\text{ \ }\forall t\geq \tau .
\end{equation*}%
Thus, the last inequality, together with (4.2)$_{1}$, gives
\begin{equation*}
\left\Vert T\left( t,\tau \right) \right\Vert _{L\left( H^{4}\left(
\mathbb{R}
^{n}\right) \times H^{2}\left(
\mathbb{R}
^{n}\right) \right) }\leq M_{2}e^{-\omega \left( t-\tau \right) }\text{, \ \
\ \ }\forall t\geq \tau ,
\end{equation*}%
for some $M_{2}>1$.
\end{proof}

Now, we can give the theorem about the finite dimensionality of the global
attractor.

\begin{theorem}
The fractal dimension of the global attractor $\mathcal{A}$ is finite.
\end{theorem}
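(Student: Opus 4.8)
The plan is to derive a stabilizability (quasi--stability) estimate for the difference of two trajectories lying on $\mathcal{A}$ and then to apply the abstract criterion of [18]: if the system is Lipschitz on $\mathcal{A}$ and there are $t^{\ast }>0$, $q\in (0,1)$ and a seminorm $n$ which is compact along $\mathcal{A}$ with $\Vert S(t^{\ast })\varphi _1-S(t^{\ast })\varphi _2\Vert \leq q\Vert \varphi _1-\varphi _2\Vert +n(\varphi _1,\varphi _2)$ for all $\varphi _1,\varphi _2\in \mathcal{A}$, then $\dim _f\mathcal{A}<\infty $. If $h\equiv 0$ then, as in the proof of Theorem~3.1, $\mathcal{A}=\{(0,0)\}$ and there is nothing to prove, so assume $h\not\equiv 0$. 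Fix $\varphi _1,\varphi _2\in \mathcal{A}$, set $(u^{i},u^{i}_{t})(t)=S(t)\varphi _i$, $w=u^{1}-u^{2}$, $W=(w,w_{t})$. Then $w$ solves the equation in (4.2) with $u=u^{1}$ and right--hand side $g(t):=\big(f(\Vert \nabla u^{1}(t)\Vert _{L^{2}(\mathbb{R}^{n})})-f(\Vert \nabla u^{2}(t)\Vert _{L^{2}(\mathbb{R}^{n})})\big)\Delta u^{2}(t)$. Since $\mathcal{A}$ is invariant and, by Theorem~3.1, bounded in $H^{4}(\mathbb{R}^{n})\times H^{2}(\mathbb{R}^{n})$, the trajectory $u^{1}$ satisfies (4.1) with a constant depending only on $\mathcal{A}$ (the bound $\int_{0}^{\infty }\Vert u^{1}_{t}\Vert _{L^{2}(\mathbb{R}^{n})}^{2}dt<\infty $ coming from (1.5) and $\alpha \geq \alpha _0$), so Lemma~4.1 applies with uniform $M,\omega $; and because $f\in C^{1}$ and $\mathcal{A}$ is bounded in $H^{4}$ one has $\Vert g(t)\Vert _{H^{2}(\mathbb{R}^{n})}\leq c\Vert \nabla w(t)\Vert _{L^{2}(\mathbb{R}^{n})}$. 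The variation of parameters formula $W(t)=T(t,0)W(0)+\int_{0}^{t}T(t,s)(0,g(s))\,ds$, combined with Lemma~4.1 for $i=0$ applied to $T(t,0)W(0)$ and for $i=1$ applied to the smoother forcing $(0,g(s))\in \{0\}\times H^{2}(\mathbb{R}^{n})$, then gives
\begin{equation*}
\Vert W(t)\Vert _{H^{2}(\mathbb{R}^{n})\times L^{2}(\mathbb{R}^{n})}\leq Me^{-\omega t}\Vert W(0)\Vert _{H^{2}(\mathbb{R}^{n})\times L^{2}(\mathbb{R}^{n})}+\frac{cM}{\omega }\sup_{0\leq s\leq t}\Vert \nabla w(s)\Vert _{L^{2}(\mathbb{R}^{n})},\quad \forall t\geq 0 .
\end{equation*}

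It remains to check that $(\varphi _1,\varphi _2)\mapsto \sup_{0\leq s\leq t^{\ast }}\Vert \nabla w(s)\Vert _{L^{2}(\mathbb{R}^{n})}$ is compact along $\mathcal{A}$; this is where the unbounded domain forces extra work, and it is dealt with in the spirit of [12]. Let $\mathcal{A}_1$ denote the projection of $\mathcal{A}$ onto the first coordinate: it is compact in $H^{2}(\mathbb{R}^{n})$, hence has equi--small tails in $H^{1}(\mathbb{R}^{n})$. Consider $\mathcal{F}=\{\,t\mapsto (S(t)\varphi )^{(1)}:\varphi \in \mathcal{A}\,\}\subset C([0,t^{\ast }];H^{1}(\mathbb{R}^{n}))$. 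By Theorem~3.1, $\mathcal{F}$ is bounded in $C([0,t^{\ast }];H^{4}(\mathbb{R}^{n}))$; since $(S(t)\varphi )^{(1)}-(S(t')\varphi )^{(1)}=\int_{t'}^{t}(S(\sigma )\varphi )^{(2)}\,d\sigma $ and $(S(\sigma )\varphi )^{(2)}$ is bounded in $H^{2}(\mathbb{R}^{n})$ uniformly in $\sigma \in [0,t^{\ast }]$ and $\varphi \in \mathcal{A}$, the family $\mathcal{F}$ is equicontinuous into $H^{1}(\mathbb{R}^{n})$; and for each fixed $t$ the set $\{(S(t)\varphi )^{(1)}:\varphi \in \mathcal{A}\}\subset \mathcal{A}_1$ is precompact in $H^{1}(\mathbb{R}^{n})$ (compact embedding $H^{2}(B_R)\hookrightarrow H^{1}(B_R)$ on balls, together with the equi--small tails). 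Hence, by the Arzel\`a--Ascoli theorem, $\mathcal{F}$ is precompact in $C([0,t^{\ast }];H^{1}(\mathbb{R}^{n}))$, so every sequence $\{\varphi _k\}\subset \mathcal{A}$ admits a subsequence along which $\sup_{0\leq s\leq t^{\ast }}\Vert \nabla (S(s)\varphi _{k_i})^{(1)}-\nabla (S(s)\varphi _{k_j})^{(1)}\Vert _{L^{2}(\mathbb{R}^{n})}\to 0$ as $i,j\to \infty $.

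Choosing $t^{\ast }$ so large that $Me^{-\omega t^{\ast }}\leq \tfrac12$, the first display becomes
\begin{equation*}
\Vert S(t^{\ast })\varphi _1-S(t^{\ast })\varphi _2\Vert _{H^{2}(\mathbb{R}^{n})\times L^{2}(\mathbb{R}^{n})}\leq \tfrac12\Vert \varphi _1-\varphi _2\Vert _{H^{2}(\mathbb{R}^{n})\times L^{2}(\mathbb{R}^{n})}+n(\varphi _1,\varphi _2),\qquad \varphi _1,\varphi _2\in \mathcal{A},
\end{equation*}
where $n(\varphi _1,\varphi _2):=\tfrac{cM}{\omega }\sup_{0\leq s\leq t^{\ast }}\Vert \nabla (S(s)\varphi _1)^{(1)}-\nabla (S(s)\varphi _2)^{(1)}\Vert _{L^{2}(\mathbb{R}^{n})}$ is compact along $\mathcal{A}$ by the preceding paragraph; together with the Lipschitz dependence of $S(t)\varphi $ on $(t,\varphi )$ over $\mathcal{A}$ (a consequence of (1.6) and Gronwall's inequality), this is the quasi--stability estimate demanded by [18] and yields $\dim _f\mathcal{A}<\infty $. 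The crux of the argument is the absence of compact Sobolev embeddings on $\mathbb{R}^{n}$: the lower--order term $\Vert \nabla w\Vert _{L^{2}(\mathbb{R}^{n})}$ produced by the nonlocal nonlinearity would be a harmless compact seminorm on a bounded domain, but on $\mathbb{R}^{n}$ its compactness must be recovered from the $H^{4}\times H^{2}$--bound on $\mathcal{A}$ (Theorem~3.1) together with the equi--smallness of tails on the compact set $\mathcal{A}$, so that both the regularity result and the compactness of the attractor are used in an essential way.
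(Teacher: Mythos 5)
Your proposal is correct and shares the paper's overall skeleton: the exponential decay of the linearized process from Lemma 4.1, the variation of parameters formula for the difference of two trajectories on $\mathcal{A}$, the key structural bound $\Vert \widehat{G}(s)\Vert \leq c\Vert \nabla w(s)\Vert _{L^{2}(\mathbb{R}^{n})}$ coming from the nonlocal form of the nonlinearity together with Theorem 3.1, and the abstract quasi-stability criterion of [18]. Where you genuinely diverge is in the one step where the unboundedness of the domain bites, namely verifying that the lower-order remainder is a compact pseudometric on $\mathcal{A}$. The paper splits $\Vert S(\tau )\theta _{2}-S(\tau )\theta _{1}\Vert _{H^{2}\times L^{2}}$ into a part on a ball $B(0,r)$ (compact via the local embedding $H^{4}\hookrightarrow H^{2}$ and Theorem 3.1) and an exterior part, and controls the latter by a second variation-of-parameters argument for the cut-off difference $\eta _{r}w$, producing the tail quantity $\Pi _{r}\rightarrow 0$; this costs the whole computation (4.8)--(4.11) and uses the $H^{4}\times H^{2}$ Lipschitz estimate (4.7), yielding a quasi-stability inequality in the $H^{4}\times H^{2}$ norm. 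You instead keep the full-space seminorm $\sup_{0\leq s\leq t^{\ast }}\Vert \nabla w(s)\Vert _{L^{2}(\mathbb{R}^{n})}$ and prove its precompactness on $\mathcal{A}$ directly by Arzel\`{a}--Ascoli in $C([0,t^{\ast }];H^{1}(\mathbb{R}^{n}))$: pointwise precompactness follows from the compactness of $\mathcal{A}$ in $H^{2}\times L^{2}$ (in fact a compact subset of $H^{2}(\mathbb{R}^{n})$ is automatically compact in $H^{1}(\mathbb{R}^{n})$, so your detour through local embeddings and equi-small tails is not even needed there), and equicontinuity from the uniform $H^{2}$-bound on $u_{t}$ supplied by Theorem 3.1 and invariance. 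This buys a shorter argument, with no cutoff computation and a quasi-stability estimate directly in the phase-space norm $H^{2}\times L^{2}$; what it gives up is the explicit quantitative tail estimate (4.10) that the paper's route produces as a by-product. Both arguments use the compactness of $\mathcal{A}$ in an essential and non-circular way (the paper through $\Pi _{r}\rightarrow 0$, you through pointwise precompactness of the trajectories), and both use the regularity of Theorem 3.1, so the logical dependencies are the same; your route is legitimate and, if anything, cleaner.
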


\begin{proof}
Let $\theta _{1},$ $\theta _{2}\in \mathcal{A}$ and $\left( u\left( t\right)
,u_{t}\left( t\right) \right) =S\left( t\right) \theta _{1},$ $\left(
v\left( t\right) ,v_{t}\left( t\right) \right) =S\left( t\right) \theta _{2}$%
. Define $w\left( t\right) :=v\left( t\right) -u\left( t\right) $. Then, we
find
\begin{equation*}
w_{tt}(t,x)+\Delta ^{2}w(t,x)+\alpha (x)w_{t}(t,x)+\lambda w(t,x)-f\left(
\left\Vert \nabla u\left( t\right) \right\Vert _{L^{2}\left(
\mathbb{R}
^{n}\right) }\right) \Delta w\left( t,x\right)
\end{equation*}%
\begin{equation}
-\left( f\left( \left\Vert \nabla v\left( t\right) \right\Vert _{L^{2}\left(
\mathbb{R}
^{n}\right) }\right) -f\left( \left\Vert \nabla u\left( t\right) \right\Vert
_{L^{2}\left(
\mathbb{R}
^{n}\right) }\right) \right) \Delta v\left( t,x\right) =0\text{.}  \tag{4.5}
\end{equation}%
\ Hence, by the variation of parameters formula, we have
\begin{equation*}
\left( w\left( t\right) ,w_{t}\left( t\right) \right) =T\left( t,0\right)
\left( w\left( 0\right) ,w_{t}\left( 0\right) \right)
+\int\limits_{0}^{t}T\left( t,\tau \right) \widehat{G}\left( \tau \right)
d\tau ,
\end{equation*}%
where $\widehat{G}\left( t\right) =\left( 0,\left( f\left( \left\Vert \nabla
v\left( t\right) \right\Vert _{L^{2}\left(
\mathbb{R}
^{n}\right) }\right) -f\left( \left\Vert \nabla u\left( t\right) \right\Vert
_{L^{2}\left(
\mathbb{R}
^{n}\right) }\right) \right) \Delta v\left( t\right) \right) $. By Lemma
4.1, we get%
\begin{equation*}
\left\Vert S\left( t\right) \theta _{2}-S\left( t\right) \theta
_{1}\right\Vert _{H^{4}\left(
\mathbb{R}
^{n}\right) \times H^{2}\left(
\mathbb{R}
^{n}\right) }
\end{equation*}%
\begin{equation}
\leq Me^{-\omega t}\left\Vert \theta _{2}-\theta _{1}\right\Vert
_{H^{4}\left(
\mathbb{R}
^{n}\right) \times H^{2}\left(
\mathbb{R}
^{n}\right) }+\widetilde{c}_{1}\int\limits_{0}^{t}e^{-\omega \left( t-\tau
\right) }\left\Vert S\left( \tau \right) \theta _{2}-S\left( \tau \right)
\theta _{1}\right\Vert _{H^{2}\left(
\mathbb{R}
^{n}\right) \times L^{2}(%
\mathbb{R}
^{n})}d\tau .\text{ }  \tag{4.6}
\end{equation}%
Applying Gronwall lemma to (4.6), we obtain%
\begin{equation}
\left\Vert S\left( t\right) \theta _{2}-S\left( t\right) \theta
_{1}\right\Vert _{H^{4}\left(
\mathbb{R}
^{n}\right) \times H^{2}\left(
\mathbb{R}
^{n}\right) }\leq Me^{\left( \widetilde{c}_{2}-\omega \right) t}\left\Vert
\theta _{2}-\theta _{1}\right\Vert _{H^{4}\left(
\mathbb{R}
^{n}\right) \times H^{2}\left(
\mathbb{R}
^{n}\right) }\text{, \ \ }\forall t\geq 0\text{.}  \tag{4.7}
\end{equation}%
Also, by (4.6), we have%
\begin{equation*}
\left\Vert S\left( t\right) \theta _{2}-S\left( t\right) \theta
_{1}\right\Vert _{H^{4}\left(
\mathbb{R}
^{n}\right) \times H^{2}\left(
\mathbb{R}
^{n}\right) }
\end{equation*}%
\begin{equation*}
\leq Me^{-\omega t}\left\Vert \theta _{2}-\theta _{1}\right\Vert
_{H^{4}\left(
\mathbb{R}
^{n}\right) \times H^{2}\left(
\mathbb{R}
^{n}\right) }+\frac{\widetilde{c}_{1}}{\omega }\sup_{0\leq \tau \leq
t}\left\Vert S\left( \tau \right) \theta _{2}-S\left( \tau \right) \theta
_{1}\right\Vert _{H^{2}\left( B\left( 0,r\right) \right) \times L^{2}\left(
B\left( 0,r\right) \right) }
\end{equation*}%
\begin{equation}
+\widetilde{c}_{1}\int\limits_{0}^{t}e^{-\omega \left( t-\tau \right)
}\left\Vert S\left( \tau \right) \theta _{2}-S\left( \tau \right) \theta
_{1}\right\Vert _{H^{2}\left(
\mathbb{R}
^{n}\backslash B\left( 0,r\right) \right) \times L^{2}(%
\mathbb{R}
^{n}\backslash B\left( 0,r\right) )}d\tau \text{, \ \ }\forall t\geq 0\text{%
\ and }\forall r>0\text{,}  \tag{4.8}
\end{equation}%
where $B\left( 0,r\right) =\left\{ x:x\in
\mathbb{R}
^{n},\left\vert x\right\vert <r\right\} $.

Now, we will estimate the integral term on the right hand side of (4.8). Let
$\eta \in C^{\infty }\left(
\mathbb{R}
^{n}\right) $, $0\leq \eta \left( x\right) \leq 1$, $\eta \left( x\right)
=\left\{
\begin{array}{c}
0,\text{ }\left\vert x\right\vert \leq 1 \\
1,\text{ }\left\vert x\right\vert \geq 2%
\end{array}%
\right. $ and $\eta _{r}\left( x\right) =\eta \left( \frac{x}{r}\right) $.
Multiplying (4.5) by $\eta _{r}$ and denoting $w_{r}\left( t\right) =\eta
_{r}w\left( t\right) $, we get%
\begin{equation*}
w_{rtt}(t,x)+\Delta ^{2}w_{r}(t,x)+\alpha (x)w_{rt}(t,x)+\lambda
w_{r}(t,x)-f\left( \left\Vert \nabla u\left( t\right) \right\Vert
_{L^{2}\left(
\mathbb{R}
^{n}\right) }\right) \Delta w_{r}\left( t,x\right)
\end{equation*}%
\begin{equation*}
-\eta _{r}\left( f\left( \left\Vert \nabla v\left( t\right) \right\Vert
_{L^{2}\left(
\mathbb{R}
^{n}\right) }\right) -f\left( \left\Vert \nabla u\left( t\right) \right\Vert
_{L^{2}\left(
\mathbb{R}
^{n}\right) }\right) \right) \Delta v\left( t,x\right) =f_{1}\left( t\right)
\text{, }
\end{equation*}%
where%
\begin{equation*}
f_{1}\left( t\right) =\Delta ^{2}\eta _{r}w+2\Delta \eta _{r}\Delta
w+2\sum_{i=1}^{n}\left( \Delta \eta _{r}\right)
_{x_{i}}w_{x_{i}}+2\sum_{i=1}^{n}\left( \eta _{r}\right) _{x_{i}}\Delta
w_{x_{i}}+4\sum_{i,j=1}^{n}\left( \eta _{r}\right)
_{x_{i}x_{j}}w_{x_{i}x_{j}}
\end{equation*}%
\begin{equation*}
-\Delta \eta _{r}f\left( \left\Vert \nabla u\left( t\right) \right\Vert
_{L^{2}\left(
\mathbb{R}
^{n}\right) }\right) w-2f\left( \left\Vert \nabla u\left( t\right)
\right\Vert _{L^{2}\left(
\mathbb{R}
^{n}\right) }\right) \sum_{i=1}^{n}\left( \eta _{r}\right) _{x_{i}}w_{x_{i}}%
\text{.}
\end{equation*}%
Then, by the variation of parameters formula, we have%
\begin{equation}
\left( w_{r}\left( t\right) ,w_{rt}\left( t\right) \right) =T\left(
t,0\right) \left( w_{r}\left( 0\right) ,w_{rt}\left( 0\right) \right)
+\int\limits_{0}^{t}T\left( t,\tau \right) G_{r}\left( \tau \right) d\tau
\text{,}  \tag{4.9}
\end{equation}%
where%
\begin{equation*}
G_{r}\left( t\right) :=\left( 0,\eta _{r}\left( f\left( \left\Vert \nabla
v\left( t\right) \right\Vert _{L^{2}\left(
\mathbb{R}
^{n}\right) }\right) -f\left( \left\Vert \nabla u\left( t\right) \right\Vert
_{L^{2}\left(
\mathbb{R}
^{n}\right) }\right) \right) \Delta v\left( t\right) +f_{1}\left( t\right)
\right) \text{.}
\end{equation*}%
Hence, applying Lemma 4.1 to (4.9) and taking into account (4.7), we obtain%
\begin{equation*}
\left\Vert \left( w_{r}\left( t\right) ,w_{rt}\left( t\right) \right)
\right\Vert _{H^{2}\left(
\mathbb{R}
^{n}\right) \times L^{2}\left(
\mathbb{R}
^{n}\right) }\leq Me^{-\omega t}\left\Vert \theta _{2}-\theta
_{1}\right\Vert _{H^{2}\left(
\mathbb{R}
^{n}\right) \times L^{2}\left(
\mathbb{R}
^{n}\right) }
\end{equation*}%
\begin{equation*}
+\widetilde{c}_{3}e^{-\omega t}\int\limits_{0}^{t}e^{\omega \tau }\left\Vert
\Delta v(\tau )\right\Vert _{L^{2}\left(
\mathbb{R}
^{n}\backslash B\left( 0,r\right) \right) }\left\Vert w\left( \tau \right)
\right\Vert _{H^{1}\left(
\mathbb{R}
^{n}\right) }d\tau +\frac{\widetilde{c}_{3}}{r}e^{-\omega
t}\int\limits_{0}^{t}e^{\omega \tau }\left\Vert w\left( \tau \right)
\right\Vert _{H^{4}\left(
\mathbb{R}
^{n}\right) }d\tau
\end{equation*}%
\begin{equation*}
\leq \widetilde{c}_{4}\left( e^{-\omega t}+\Pi _{r}e^{(\widetilde{c}%
_{1}-\omega )t}\right) \left\Vert \theta _{2}-\theta _{1}\right\Vert
_{H^{2}\left(
\mathbb{R}
^{n}\right) \times L^{2}\left(
\mathbb{R}
^{n}\right) }\text{, \ \ }\forall t\geq 0\text{ and }\forall r\geq 1\text{,}
\end{equation*}%
where
\begin{equation*}
\Pi _{r}:=\underset{t\geq 0}{\sup }\left\Vert \Delta v(t)\right\Vert
_{L^{2}\left(
\mathbb{R}
^{n}\backslash B\left( 0,r\right) \right) }+\frac{1}{r}\text{.}
\end{equation*}%
Then, the last inequality, together with (4.7), gives%
\begin{equation*}
\int\limits_{0}^{t}e^{-\omega \left( t-\tau \right) }\left\Vert S\left( \tau
\right) \theta _{2}-S\left( \tau \right) \theta _{1}\right\Vert
_{H^{2}\left(
\mathbb{R}
^{n}\backslash B\left( 0,r\right) \right) \times L^{2}(%
\mathbb{R}
^{n}\backslash B\left( 0,r\right) )}d\tau
\end{equation*}%
\begin{equation}
\leq \widetilde{c}_{5}\left( e^{-\omega t}+\Pi _{r}e^{(\widetilde{c}%
_{2}-\omega )t}\right) t\left\Vert \theta _{2}-\theta _{1}\right\Vert
_{H^{2}\left(
\mathbb{R}
^{n}\right) \times L^{2}\left(
\mathbb{R}
^{n}\right) },\text{ \ \ \ }\forall t\geq 0\text{ and }\forall r\geq 1\text{.%
}  \tag{4.10}
\end{equation}%
So, by (4.8) and (4.10), we have
\begin{equation*}
\left\Vert S\left( t\right) \theta _{2}-S\left( t\right) \theta
_{1}\right\Vert _{H^{4}\left(
\mathbb{R}
^{n}\right) \times H^{2}\left(
\mathbb{R}
^{n}\right) }\leq \widetilde{c}_{6}\left( e^{-\omega t}+e^{-\omega t}t+\Pi
_{r}e^{(\widetilde{c}_{2}-\omega )t}t\right) \left\Vert \theta _{2}-\theta
_{1}\right\Vert _{H^{4}\left(
\mathbb{R}
^{n}\right) \times H^{2}\left(
\mathbb{R}
^{n}\right) }
\end{equation*}%
\begin{equation}
+\widetilde{c}_{6}\sup_{0\leq \tau \leq t}\left\Vert S\left( \tau \right)
\theta _{2}-S\left( \tau \right) \theta _{1}\right\Vert _{H^{2}\left(
B\left( 0,r\right) \right) \times L^{2}\left( B\left( 0,r\right) \right) },%
\text{ \ \ \ }\forall t\geq 0\text{ and }\forall r\geq 1\text{.}  \tag{4.11}
\end{equation}%
From the compactness of $\mathcal{A}$, it follows that $\Pi _{r}\rightarrow
0 $, uniformly with respect to the trajectories from $\mathcal{A}$ , as $%
r\rightarrow \infty $. Thus, applying [18, Theorem 7.9.6], by (4.7) and
(4.11), we obtain the finite dimensionality of $\mathcal{A}$.
\end{proof}

\end{document}